\theoremstyle{thmstyleone}%
\newtheorem{theorem}{Theorem}
\newtheorem{proposition}[theorem]{Proposition}%
\newtheorem{lemma}[theorem]{Lemma}
\theoremstyle{thmstyletwo}%
\theoremstyle{thmstylethree}%
\newcommand{\N}{{\mathbb N}}
\newcommand{\blue}[1]{{\color{black}#1}}
\begin{document}

\title[Condensation in ZRP with fast rate]{Condensation in zero-range processes with a fast rate}


\author[1]{\fnm{Watthanan} \sur{Jatuviriyapornchai}}\email{watthanan.jat@mahidol.ac.th}

\author*[2]{\fnm{Stefan} \sur{Grosskinsky}}\email{Stefan.Grosskinsky@math.uni-augsburg.de}


\affil[1]{\small\orgdiv{Department of Mathematics, Faculty of Science}, \orgname{Mahidol University}, \orgaddress{\street{Rama VI Road}, \city{Ratchathewi}, \state{Bangkok}, \postcode{10400}, \country{Thailand}}}

\affil*[2]{\small\orgdiv{Lehrstuhl f\"ur Stochastik und ihre Anwendungen}, \orgname{ Universit\"at Augsburg}, \orgaddress{\street{Universit\"atsstr. 14}, \city{Augsburg}, \postcode{86159}, \country{Germany}}}



\abstract{We introduce a simple zero-range process with constant rates and one fast rate for a particular occupation number, which diverges with the system size. Surprisingly, this minor modification induces a condensation transition in the thermodynamic limit, where the structure of the condensed phase depends on the scaling of the fast rate. We study this transition and its dependence on system parameters in detail on a rigorous level using size-biased sampling. This approach generalizes to any particle system with stationary product measures, and the techniques used in this paper provide a foundation for a more systematic understanding of condensing models with a non-trivial condensed phase.}

\keywords{interacting particle systems, condensation, structure of the condensate, size-biased sampling}



\maketitle

\section{Introduction}

Condensation in interacting particle systems has been a topic of continued research interest in recent years. 
When the particle density exceeds a critical value, the system phase separates into a homogeneous background or bulk phase and a condensate, where a non-zero fraction of the mass concentrates on a vanishing volume fraction. Building on the first few results for zero-range processes \cite{o1998jamming,evans2000phase,jeon2000size,grosskinsky2003condensation,godreche2003dynamics}, condensation has been observed in various models and scaling limits, including inclusion processes \cite{grosskinsky2011condensation} and non-linear extensions thereof \cite{evans2014condensation}. In addition to stationary results, \blue{the formation of the condensate from homogeneous initial conditions has attracted attention \cite{beltran2017martingale,godreche2016coarsening,jatuviriyapornchai2016coarsening,armendariz2023fluid}} as well as stationary dynamics in the context of metastability. In all of these examples, the condensate consists asymptotically only of a single cluster site, the location of which exhibits metastable dynamics on a slow time scale. This has been established for zero-range \cite{beltran2012metastability,armendariz2015metastability,seo2019condensation,choi2024} and inclusion processes \cite{grosskinsky2013dynamics,bianchi2017metastability,kim2021condensation}, see also \cite{beltran2015martingale,landim2019metastable} and references therein. For the inclusion process, condensation only occurs when the rate of independent particle diffusion vanishes with the system size \cite{grosskinsky2011condensation}. Models with size-dependent parameters have also been studied for zero-range processes \cite{grosskinsky2008discontinuous,chleboun2015dynamical}. \blue{Another important topic has been the role of spatial inhomogeneities (see \cite{godreche2012condensation} and references therein), which we do not consider in this paper.}

A particular question of recurring interest has been to identify spatially homogeneous models where the condensed phase exhibits a more interesting structure. This has for example been observed in variants of the zero-range process with cut-off \cite{schwarzkopf2008zero} or systems with pair-factorized stationary measures \cite{evans2006interaction,evans2015condensation}. More recently, the condensed phase of the inclusion processes with a moderately small diffusion rate has been shown to consist of a diverging number of independent cluster sites or exhibit an interesting hierarchical structure in a particular scaling \cite{jatuviriyapornchai2020structure}, given by the Poisson-Dirichlet distribution of mass partitions. The latter has been observed in various contexts including population genetic models and random partitions, see \cite{feng2010poisson} for a general overview. The dynamics of the inclusion process in these scaling limits on complete graphs has recently been studied in \cite{chleboun2023size} and extensions of stationary Poisson-Dirichlet statistics to other spatially homogeneous models in \cite{chleboun2022poisson}.

The aim of this paper is to study a new example of a condensing particle system that exhibits a non-trivial condensate, which is interesting primarily due to its simplicity. Consider a spatially homogeneous zero-range process with a constant jump rate equal to $1$, except if the occupation number of a site takes a particular value $A>1$. A site that contains $A$ particles loses one of them with a large rate $\theta_L \gg 1$ that diverges with the system size. This introduces a weak form of an exclusion interaction, where occupation numbers or cluster sizes of size $A$ are unstable and all other occupation numbers are stable. It is surprising that these dynamics do not only diminish the stationary probability of observing occupation numbers of size $A$ but lead to a condensation transition in the system, where the condensate exhibits clusters on a scale depending on the scaling of $\theta_L$. \blue{In fact, we can keep the jump rates for occupation numbers smaller than $A$ general, which only affect the bulk distribution.} The structure of the condensate depends crucially on jump rates for occupation numbers larger than $A$, which we discuss in detail for several examples. 

The paper is structured as follows: In Section \ref{sec:setting} we introduce the model and its stationary distributions and give a first heuristic argument for the condensation phenomenon. In Section \ref{sec:results} we formulate rigorous results on the condensation transition in the thermodynamic scaling limit and present the proofs based on size-biased sampling. For completeness, we also include results on a scaling limit with fixed volume and diverging density. We discuss the robustness of our results and the relevance of the size-biased approach for more general systems in Section \ref{sec:conclusion}.

\section{Mathematical Setting\label{sec:setting}}

\subsection{The zero-range process and its stationary distributions}

We consider a zero-range process (ZRP) on a finite set of sites $\Lambda$ with size $L=|\Lambda |$, called lattice in the following, with an infinitesimal generator of the form
\begin{equation}\label{gene}
\blue{\mathcal{L} f(\eta) = 
\sum_{x,y \in \Lambda} p(x,y) g(\eta_x ) \left[
f(\eta^{xy}) - f(\eta)
\right]}\, .
\end{equation}
\blue{Here $\eta \in E_{L, N} =\big\{\eta \in\N_0^L :\sum_{x\in\Lambda} \eta_x =N\big\}$, where $\eta_x \in\N_0$ denotes the number of particles on site $x$ and $f\in C_b (E_{L,N})$ is a continuous test function.}
Single particles jump from site $x$ to $y$, changing the configuration from $\eta$ to $\eta^{xy}$ with $\eta^{xy}_z =\eta_z -\delta_{z,x} +\delta_{z,y}$ \blue{provided that $\eta_z>0$}. This jump occurs with rate $p(x,y)g(\eta_x )$, where we assume that the spatial factor is irreducible and doubly stochastic, i.e.,
\[
\sum_{y\in\Lambda} \big( p(x,y)-p(y,x)\big) =0\quad\mbox{for all }x\in\Lambda\ .
\]
The rate $g:\N_0 \to [0,\infty )$ fulfills the usual irreducibility condition $g(n)=0$ if and only if $n=0$. Under these conditions, the generator \blue{$\mathcal{L}$} defines an irreducible continuous-time Markov chain on the finite state space $E_{L, N}$, and it is well known \cite{andjel1982invariant} that its unique stationary measure is spatially homogeneous and of product form
\begin{equation}\label{wpi}
\pi_{L,N}[d\eta] =
\dfrac{1}{Z_{L,N}}\prod_{x\in\Lambda} w (\eta_x)d\eta\ ,\quad\eta\in E_{L,N}\ ,
\end{equation}
with stationary weights
\begin{equation}\label{eq: weights}
w(n)=\prod_{k=1}^n \frac{1}{g(k)} ,\ n\geq 0\quad\mbox{and normalization}\quad Z_{L,N} =\sum_{\eta\in E_{L,N}} \prod_{x\in\Lambda} w(\eta_x )\ .    
\end{equation}
We will be interested in a ZRP with size-dependent rates of the form
\begin{equation}
\label{eq_rates}
g_L(n)  =
\begin{cases}
0 &\quad \text{ if } n=0\, ,\\
g(n)>0 &\quad\text{ if } n=1,\ldots, A-1\, \blue{,}\\
\theta_L &\quad \text{ if } n=A\, ,\\
1 &\quad \text{ if }n>A\, ,
\end{cases}
\end{equation}
where $A\geq 2$ and $\theta_L \to\infty$ for $L\to\infty$. This leads to stationary weights
\begin{equation}
\label{eq_weight}
w_L(n)  = \prod_{k=1}^n \frac{1}{g_L (k)} =
\begin{cases}
w(n) &\quad \text{ if } n< A\, ,\\
\frac{w(A-1)}{\theta_L}=\frac{1}{\theta_L} &\quad \text{ if } n \geq A\, ,
\end{cases}
\end{equation}
with $w(n)$, $n\leq A-1$ as in \eqref{eq: weights}. For simplicity of notation, we want to absorb the factor $w(A-1)$ for $n\geq A$ in the definition of $\theta_L$ in the following.

\blue{While all results in the paper apply for general $g(n)$ and $w(n)$ as defined in \eqref{eq_rates} and \eqref{eq_weight}, as a particularly simple example we can think of $g_L (n)=1$ for $n\neq 0,A$ and $g(A)=\theta_L$, so that $w_L (n)=w(n)=1$ for $n<A$ and $w_L (n)=\theta_L^{-1}$ for $n\geq A$.}\\

\subsection{Grand-canonical distributions and a first heuristic\label{sec:heuristics}}

In addition to the canonical measures $\pi_{L,N}$ with a fixed number of particles, the ZRP also has a family of grand-canonical product measures \cite{andjel1982invariant,schuetz2008}
\begin{equation}\label{nu}
\nu_\phi^L [d\eta] =
\prod_{x\in\Lambda} \frac{1}{z_L (\phi )}w_L (\eta_x) \phi^{\eta_x} d\eta\ ,\quad\eta\in\N_0^L\ ,
\end{equation}
with single-site normalization
\begin{equation}\label{zlphi}
z_L(\phi) 
= \sum_{n\geq 0} \phi^nw_L(n) 
= \sum_{n=0}^{A-1} \phi^n w(n) +\frac{1}{\theta_L}\left(\frac{\phi^A}{1-\phi}\right)\ .
\end{equation}
The fugacity parameter $\phi\geq 0$ regulates the expectation of the (random) number of particles in the system and the measures \eqref{nu} are defined for all $\phi\in [0,1)$. The particle density per site is given by
\begin{align}
    R_L (\phi )&=\sum_{n\geq 1} n\frac{w_L (n)\phi^n}{z_L (\phi )} =\phi\frac{z'_L (\phi )}{z_L (\phi )}\nonumber\\
    &=\frac{1}{z_L (\phi )}\bigg( \sum_{n=0}^{A-1} n\phi^n w(n) +\frac{1}{\theta_L}\Big(\frac{\phi^A (A(1-\phi )+\phi )}{(1-\phi )^2}\Big)\bigg) \label{rhol}
\end{align}
and for $L\to\infty$ we have for all $\phi\in [0,1 )$
\begin{equation}\label{rzlim}
    z_L (\phi )\to z(\phi ):=\sum_{n=0}^{A-1} \phi^n w(n)\quad\mbox{and}\quad R_L (\phi )\to R(\phi ):=\frac{1}{z(\phi )}\sum_{n=1}^{A-1} n\phi^n w(n)\ .
\end{equation}
The limiting expressions can be extended to $\phi \nearrow 1$ and we get
\begin{equation}\label{extend}
    z(\phi )\to z(1)=\sum_{n=0}^{A-1} w(n)<\infty\ ,\quad R(\phi )\to R(1)=\rho_c :=\frac{1}{z(1)}\sum_{n=1}^{A-1} n w(n)<\infty\ .
\end{equation}
\begin{figure}
    \begin{center}
\mbox{\includegraphics[width=0.5\textwidth]{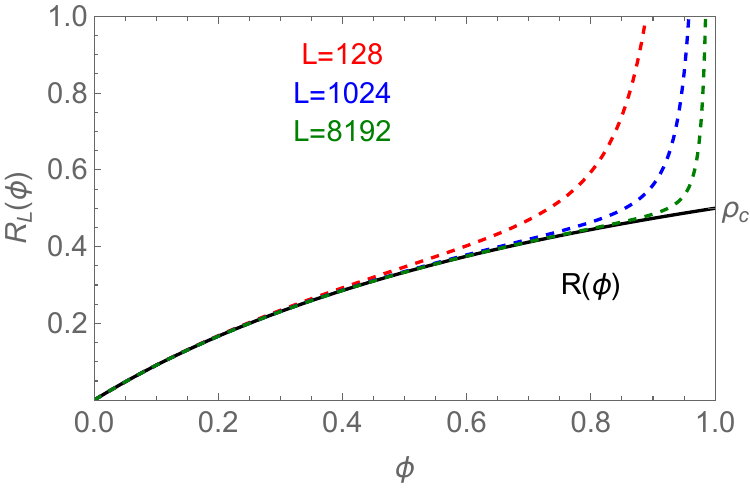}\quad\includegraphics[width=0.5\textwidth]{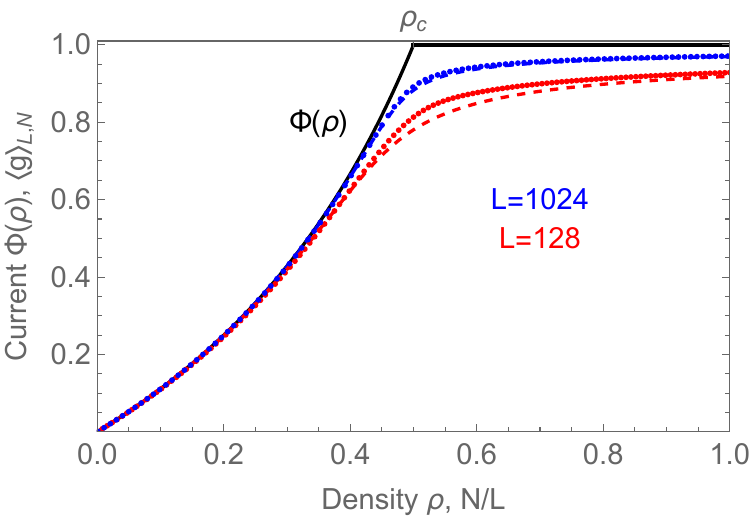}
    }
    \end{center}
    \caption{\label{fig:gc}
    The density $R_L (\phi )$ \eqref{rhol} (dashed coloured lines) converges pointwise for all $\phi <1$ to $R(\phi )$ (black line) for $L\to\infty$ \eqref{rzlim} (left). The canonical current \eqref{cancurr} (dotted lines) compares well with the grand-canonical prediction $\Phi_L (\rho )$ \eqref{gcancurr} (dashed lines). For $L\to\infty$ both converge to the same limit $\Phi (\rho )$ (black line), which is the inverse of $R(\phi )$ \eqref{rzlim}. Parameters are $A=2$, $w(n)=1$ for $n=0,1$ and $\theta_L =1/L$.
    }
\end{figure}

\noindent This convergence is illustrated in Figure \ref{fig:gc}. Note that the limits $L\to\infty$ and $\phi\nearrow 1$ do not commute since $z_L (\phi )$ and $R_L (\phi )$ diverge for $\phi\nearrow 1$. In analogy to zero-range models with size-dependent parameters studied in \cite{schuetz2008}, this corresponds to the following condensation transition. In the thermodynamic limit $L,N\to\infty$ with $N/L\to\rho\geq 0$, when the particle density $\rho$ exceeds the critical density $\rho_c$, the system separates into a homogeneous bulk or background phase with density $\rho_c$ and the excess mass $N-\rho_c L$ concentrates on a vanishing volume fraction (the condensate or condensed phase). While the condensate in most systems studied so far consists only of a single site, it exhibits an interesting non-trivial structure in the present model depending on the scaling of the parameter $\theta_L$. Before this is established rigorously in the next section, we give a first heuristic on the scale of cluster sizes in the condensed phase based on the grand-canonical measures.


We see in Figure \ref{fig:gc} (left) that for fixed $L$ we can find $\Phi_L(\rho )=1-\Delta_L <1$ such that $R_L (1-\Delta_L )=\rho$ for any $\rho\geq 0$, including $\rho >\rho_c$. With \eqref{rhol} this leads to
\[
R_L (1-\Delta_L )=\rho_c +O(\Delta_L )+\frac{(1-A\Delta_L )((A-1)\Delta_L +1)}{\theta_L z_L (1-\Delta_L )\Delta_L^2} = \rho  \ ,
\]
and with $z_L (1-\Delta_L )=z(1)+O(\Delta_L) +O(1/(\theta_L \Delta_L))$ we get to leading order
\[
\rho -\rho_c =\frac{1}{z(1)\theta_L \Delta_L^2}\big(1+O(\Delta_L )\big)\ .
\]
This implies that for $\rho >\rho_c$ to leading order
\begin{equation}\label{gcancurr}
    \Delta_L \simeq\frac{1}{\sqrt{(\rho -\rho_c)z(1)\theta_L}}\quad\mbox{so that}\quad \Phi_L (\rho )\simeq 1-\frac{1}{\sqrt{(\rho -\rho_c)z(1)\theta_L}}\ ,
\end{equation}
where we use the notation $a_L \simeq b_L$ for sequences with $a_L /b_L \to 1$, $L\to\infty$. 
Together with \eqref{nu} and \eqref{zlphi} we get to leading order
\begin{equation}\label{geo}
\nu^L_{\Phi_L (\rho )} [\eta_x =k|\eta_x \geq A] =\frac{(1-\Delta_L )^{k-A}}{\theta_L}\Big/\frac{1}{\theta_L \Delta_L} =\Delta_L (1-\Delta_L )^{k-A} \ ,\quad k\geq A\ ,    
\end{equation}
i.e. cluster sites with occupation numbers $\geq A$ are geometrically distributed with expectation
\begin{equation}\label{scale}
C_L :=1/\Delta_L =\sqrt{(\rho -\rho_c )z(1)\theta_L}\quad\mbox{(scale of cluster sizes)}\ .
\end{equation}
Scaling $k=k_L$ such that $k_L /C_L =k_L \Delta_L \to u\geq 0$ we get an exponential distribution for the asymptotic cluster size on scale $C_L$, i.e.
\begin{equation}\label{heuri}
\nu^L_{\Phi_L (\rho )} [\eta_x \geq k_L |\eta_x \geq A] \simeq \Big( 1-\frac{1}{C_L} \Big)^{uC_L} \to e^{-u}\quad\mbox{as }L\to\infty\ .
\end{equation}
The geometric distribution in \eqref{geo} is consistent with effective birth-death dynamics of cluster sites with exit rate $1$ and entrance rate $\Phi_L (\rho )$. The latter is well known (see e.g. \cite{godreche2003dynamics,grosskinsky2003condensation}) to be the stationary current or activity in the ZRP in the grand-canonical ensemble at density $\rho$, and is illustrated as the inverse of $R_L (\phi )$ in Figure \ref{fig:gc} (right). The grand-canonical current $\Phi_L (\rho )$ is compared with the canonical current under the distribution $\pi_{L,N}$. Using \eqref{eq: weights} this is in general given by
\begin{equation}\label{cancurr}
\langle g\rangle_{L,N} :=\sum_{n=1}^N g(n)\pi_{L,N} [\eta_x =n]=\frac{Z_{L,N-1}}{Z_{L,N}} \ ,
\end{equation}
which can be computed recursively via $Z_{L,N} =\sum_{n=0}^N Z_{L/2 ,n} Z_{L/2 ,N-n}$ (cf.\ e.g.\ \cite{chleboun2010finite}).\\

\begin{figure}
    \begin{center}
\mbox{\includegraphics[width=0.5\textwidth]{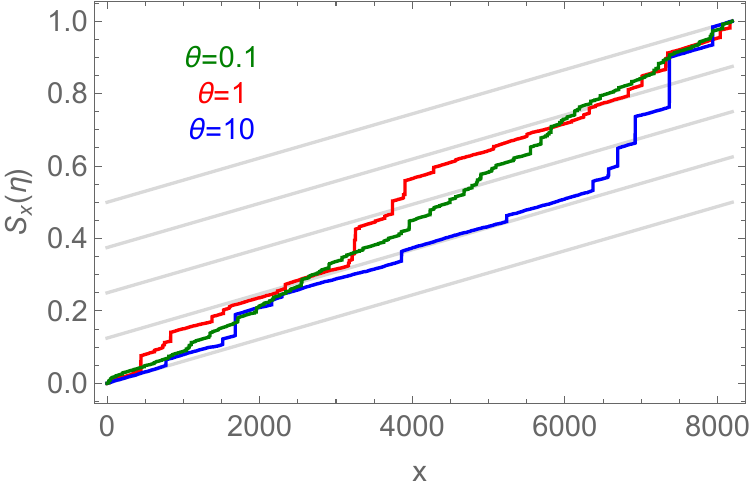}\quad\includegraphics[width=0.48\textwidth]{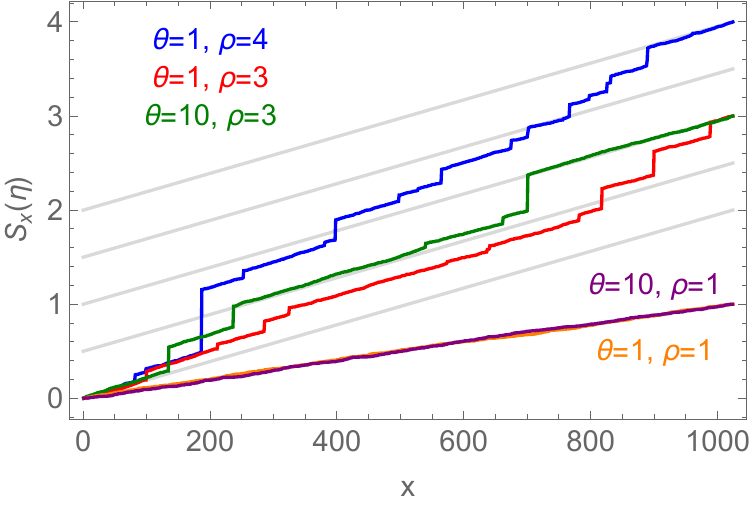}
    }
    \end{center}
    \caption{\label{fig:sim}
Integrated stationary density profiles $S_x (\eta )=\sum_{k=1}^x \eta_x$ for a system with parameters \eqref{simple} and several values for $\Theta =\theta_L /L$ confirm condensation with bulk density $\rho_c$ \eqref{extend} (indicated by grey lines). We have $A=2$, $L=8192$, $\rho =1$ with $\rho_c =1/2$ (left) and $A=5$, $L=1024$, $\rho =3$ and $4$ with $\rho_c =2$, \blue{as well as subcritical $\rho =1$} (right). The size of clusters is increasing with $\Theta$ in accordance with the scale $C_L$ \eqref{scale}.
}\end{figure}

This heuristic is confirmed in Figure \ref{fig:sim}. We simulate the process to stationarity and show integrated density profiles from single realizations for different parameter values of $A$ and $\Theta=\lim_{L\to\infty} \theta_L /L$ for the simple ZRP with
\begin{equation}\label{simple}
    g_L(n)  =
\begin{cases}
1 &,\ n=1,\blue{\dots}, A{-}1\, \\
\theta_L &,\ n=A\, \\
1 &,\ n>A \, 
\end{cases}\ ,\quad w_L(n)  =
\begin{cases}
1 &,\ n<A\\
1/\theta_L &,\ n\geq A
\end{cases}\ ,\quad \rho_c =\frac{A-1}{2} \ .
\end{equation}
Here $z(1)=A$ and we have a uniform distribution in the bulk. 

The equivalence of ensembles between grand-canonical and canonical distributions has been established in the bulk phase for many condensing particle systems \cite{grosskinsky2003condensation,grosskinsky2008equivalence,grosskinsky2008discontinuous,evans2014condensation,chleboun2014condensation,huveneers2019equivalence}. In the next section, we will show that this extends also to the condensed phase in our model so that the above prediction on the asymptotic cluster size distribution is correct, and that furthermore, cluster sites are also asymptotically independent.\\




\section{\blue{Rigorous Results on Condensation}\label{sec:results}}

\subsection{\blue{Main results}}
In this section, we give a precise version of the above heuristics of the condensation transition in the thermodynamic limit
\begin{equation}\label{thermo}
    L,N,\theta_L \to\infty\quad\mbox{such that}\quad \frac{N}{L}\to\rho >0\ ,\quad \frac{\theta_L}{L^\gamma}\to\Theta >0\mbox{ with }\gamma >0\ .
\end{equation}
Our results describe the limiting stationary behaviour under the canonical distributions $\pi_{L,N}$ \eqref{wpi}, which are uniquely determined by the weights \eqref{eq_weight} so that the explicit dynamics of the ZRP can be ignored from now on.\\

\begin{theorem}\label{thm1}
    The ZRP with stationary weights \eqref{eq_weight} exhibits a condensation transition in the thermodynamic limit \eqref{thermo} with critical density \eqref{extend}
    \begin{equation}\label{rhoc}
        \rho_c =\sum_{n=1}^{A-1} n w(n)/\sum_{n=1}^{A-1} w(n)\ .
    \end{equation}
    This means that for any distinct (fixed) lattice sites $x_1 ,\ldots ,x_m \in\Lambda$ (for $L$ large enough) $\eta_{x_1} ,\ldots ,\eta_{x_m}$ converge in distribution under $\pi_{L,N}$ to iid random variables on $\{ 0,\ldots ,A-1\}$ with marginal distribution (see \eqref{rzlim})
    \begin{align*}
    \nu_\phi [\eta_{x_i} =k] &=\frac{w(k)\phi^k}{z(\phi )}\qquad\qquad\qquad\ ,\quad\;\mbox{for }R(\phi )=\rho\leq\rho_c \, ,\\
    \nu_1 [\eta_{x_i} =k] &=\frac{w(k)}{w(0)+ \ldots +w(A-1)}\ ,\quad\mbox{for }\rho\geq\rho_c \ .   
    \end{align*}
\end{theorem}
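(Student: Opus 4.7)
The product structure of $\pi_{L,N}$ in \eqref{wpi} gives the exact identity
\begin{equation*}
\pi_{L,N}\bigl[\eta_{x_1}=k_1,\ldots,\eta_{x_m}=k_m\bigr]=\Bigl(\prod_{i=1}^{m} w_L(k_i)\Bigr)\,\frac{Z_{L-m,N-K}}{Z_{L,N}}\,,\qquad K:=\sum_{i=1}^m k_i\,,
\end{equation*}
so the asymptotic independence of any fixed number of marginals will follow automatically once $Z_{L-m,N-K}/Z_{L,N}$ is shown to have a limit that factorizes over the $k_i$. The targets are $\phi^K/z(\phi)^m$ in the subcritical regime (with $\phi=R^{-1}(\rho)$) and $1/z(1)^m$ in the supercritical regime; in the latter case the prefactor $w_L(k_i)=1/\theta_L\to 0$ for $k_i\geq A$ forces the corresponding marginal weight to vanish, which yields the restriction to $\{0,\ldots,A-1\}$ claimed in the theorem.

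For the subcritical case $\rho\le\rho_c$ I would choose the unique $\phi\in[0,1]$ with $R(\phi)=\rho$ and use $Z_{L,N}=\phi^{-N}z_L(\phi)^L\,\nu_\phi^L[\sum_x\eta_x=N]$ to rewrite
\begin{equation*}
\frac{Z_{L-m,N-K}}{Z_{L,N}}=\frac{\phi^K}{z_L(\phi)^m}\cdot\frac{\nu_\phi^{L-m}[\sum_x\eta_x=N-K]}{\nu_\phi^L[\sum_x\eta_x=N]}\,.
\end{equation*}
For $\phi<1$ the single-site distribution has exponential tails and finite variance, and the classical local central limit theorem gives that the ratio of probabilities tends to $1$; together with $z_L(\phi)\to z(\phi)$ from \eqref{rzlim} this produces the desired product limit. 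The boundary point $\rho=\rho_c$ is covered by the same argument at $\phi=1$, since then the heavy tail of $\nu_\phi$ vanishes in the limit, $\nu_1$ is supported on $\{0,\ldots,A-1\}$, and $\sigma^2(1)<\infty$ by \eqref{extend}.

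The supercritical case $\rho>\rho_c$ is where the real work lies. Now $\phi_L=\Phi_L(\rho)\to 1$ drives the cluster-scale tail of $\nu_{\phi_L}$ and the single-site variance diverges on the scale $C_L$ from \eqref{scale}, so the classical LCLT is not available off the shelf. I would instead proceed via size-biased sampling, writing
\begin{equation*}
p_k(L,N):=\pi_{L,N}[\eta_x=k]=\frac{N}{Lk}\,\pi_{L,N}[\eta_{X^*}=k]\,,\qquad k\geq 1\,,
\end{equation*}
where $X^*$ is the site of a uniformly chosen particle. Decomposing $\eta\in E_{L,N}$ by the set $J=\{x:\eta_x\geq A\}$ of heavy sites splits $Z_{L,N}$ into a bulk factor on $|\Lambda\setminus J|$ sites (with occupation $<A$, for which the standard LCLT with finite variance applies) and a condensate factor $\theta_L^{-|J|}$ times a composition count for the allocation of the excess mass $\simeq(\rho-\rho_c)L$ over $|J|$ sites. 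Sharp asymptotics of this sum show that a size-biased particle lies on a bulk site with probability $\to\rho_c/\rho$ with the tilted law $kw(k)/(\rho_c z(1))$ on $\{1,\ldots,A-1\}$; inverting the size-biasing yields $p_k(L,N)\to w(k)/z(1)$ for $k<A$, while for each fixed $k\geq A$ the prefactor $w_L(k)=1/\theta_L$ combined with a crude a priori bound on $Z_{L-1,N-k}/Z_{L,N}$ forces $p_k(L,N)=o(1)$. The main obstacle is the uniform-in-$\gamma$ control of the heavy-site sum in the partition function, which interlocks with the proofs of the subsequent condensate-structure theorems; for Theorem~\ref{thm1} itself only the robust fact that bulk sites carry density $\rho_c$ in the limit is needed.
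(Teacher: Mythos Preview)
Your approach is much more hands-on than the paper's. The paper observes that the weights satisfy the sub-exponential condition
\[
\frac{1}{L}\log w_L(aL)\simeq -\frac{1}{L}\log\theta_L\to 0\quad\text{for all }a>0\,,
\]
and then simply invokes a general equivalence-of-ensembles result for size-dependent weights (Proposition~A.1 in \cite{chleboun2022poisson}), which covers all three regimes $\rho<\rho_c$, $\rho=\rho_c$, $\rho>\rho_c$ at once. This buys the conclusion essentially for free, whereas you are reproving that black box from scratch. Your route is more self-contained, but the paper's is far shorter and avoids the delicate partition-function asymptotics you defer to the later theorems.

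Your direct argument has two genuine gaps. First, at the critical point $\rho=\rho_c$ you propose tilting at $\phi=1$, but $z_L(1)=\sum_{n<A}w(n)+\theta_L^{-1}\sum_{n\ge A}1=\infty$ for every finite $L$, so the identity $Z_{L,N}=\phi^{-N}z_L(\phi)^L\,\nu_\phi^L[\sum\eta_x=N]$ is undefined there; the statement ``$\nu_1$ is supported on $\{0,\ldots,A-1\}$'' refers to the limiting measure, not the one appearing in the tilting. The fix is to tilt at an $L$-dependent $\phi_L\nearrow 1$ with $R_L(\phi_L)=N/L$, but then you are squarely in triangular-array territory. Second, even for $\rho<\rho_c$ you invoke the ``classical local central limit theorem'', but the single-site law $\nu_\phi^L$ still depends on $L$ through the $1/\theta_L$ tail, and your chosen $\phi$ satisfies $R(\phi)=\rho$ rather than $R_L(\phi)=N/L$, so the event $\sum_x\eta_x=N$ is not exactly centred; the mismatch $L\,R_L(\phi)-N$ can be as large as $L/\theta_L$, which for small $\gamma$ exceeds the $O(\sqrt{L})$ window. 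Both issues are repaired by a triangular-array local limit theorem (this is precisely what the paper uses, via Lemma~A.3 in \cite{chleboun2022poisson} and \cite{davis1995elementary}), but you should name that tool rather than the classical LCLT.
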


\noindent The proof is given in the next subsection, and this is a direct consequence of a classical equivalence of ensembles resulting from condensing particle systems. So in any finite test volume, we will only observe the bulk phase in the limit. In order to capture the distribution of the condensed phase it is convenient to work with a size-biased reordering $\tilde\eta$ of the particle configurations $\eta$ (see e.g. \cite{jatuviriyapornchai2020structure,chleboun2022poisson} for more details on this). For a given configuration $\eta\in E_{L,N}$, we pick the first site randomly according to its mass fraction, i.e.
\[
\tilde\eta_1 =\eta_x \quad\mbox{with probability }\frac{\eta_x}{N},\ x\in\Lambda\ .
\]
\blue{Alternatively, we can think of picking a particle uniformly at random, which has location $x$ with probability $\eta_x /N$.} 
Keeping track of the chosen site indices $x_1 ,\ldots x_k$ we repeat recursively as
\[
\tilde\eta_{k+1} =\eta_x \quad\mbox{with probability }\frac{\eta_x}{N-(\eta_{x_1} +\ldots +\eta_{x_k})},\ x\in\Lambda\setminus\{ x_1 ,\ldots ,x_k \}\ .
\]
to produce a size-biased (random) reordering $\tilde\eta$ of $\eta$ where all empty sites are stacked at the end. This has the advantage, that cluster sites have a non-zero probability to be observed in $\tilde\eta$, since the total mass fraction in the condensate is $(\rho -\rho_c )/\rho >0$ while its volume fraction vanishes. Of course, any spatial information from $\eta$ is lost in $\tilde\eta$, but since we only consider spatially homogeneous systems with stationary product measures \eqref{wpi} this is not a restriction. 
\blue{If a condensed configuration consists for instance of a single condensate site and a homogeneous background at density $\rho_c$, then $\tilde\eta_1$ corresponds to the condensate with asymptotic probability $(\rho -\rho_c )/\rho$ or to a bulk site with probability $\rho_c /\rho$. Both are of order $1$, since bulk and condensate both have a non-zero fraction of the total mass, as opposed to the vanishing volume fraction of the condensate.}

The marginal distributions of size-biased configurations are given in terms of size-biased weights \eqref{wpi}
\begin{equation}\label{sb}
\pi_{L,N} [\tilde\eta_1 =n] =\frac{L}{N} n\, w_L (n)\,\frac{Z_{L-1,N-n}}{Z_{L,N}} \ ,\quad n\in \{ 1,\ldots ,N\}
\end{equation}
which is normalized since the particle density under $\pi_{L,N}$ is fixed to $N/L$. Due to the product structure of $\pi_{L,N}$ we get recursively from the definition of $\tilde\eta$
\begin{align}
\pi_{L,N} [\tilde\eta_1 =n_1 ,\ldots ,\tilde\eta_m =n_m ] =&\frac{L(L-1)\cdots (L-m+1)}{N(N-n_1)\cdots (N-(n_1 +\ldots+n_m))}\nonumber\\
& n_1 w_L (n_1)\cdots n_m w(n_m )\,\frac{Z_{L-m,N-(n_1 +\ldots +n_m)}}{Z_{L,N}}\ ,\label{pf}
\end{align}
for any fixed $m\in\N$, $n_i \geq 1$ so that $n_1 +\ldots +n_m \leq N$. This can also be written as
\begin{align}
\pi_{L,N} [\tilde\eta_1 =n_1 ,\ldots ,\tilde\eta_m =n_m ] =&\pi_{L,N} [\tilde\eta_1 =n_1 ]\,\pi_{L-1,N-n_1} [\tilde\eta_2 =n_2 ]\nonumber\\
&\cdots \pi_{L-m+1,N-(n_1 +\ldots+n_{m-1})} [\tilde\eta_m =n_m ]\ .
\label{pf2}
\end{align}
This product form of the marginal distributions enables us to prove the following result by analyzing the scaling behaviour of $Z_{L,N}$ given in the next subsection.\\

\begin{theorem}\label{thm2}
    Consider the ZRP with stationary weights \eqref{eq_weight} in the thermodynamic limit \eqref{thermo} with density $\rho >\rho_c$ \eqref{rhoc} and $\gamma\in (0,2)$. Then we get on the scale $C_L =\sqrt{(\rho -\rho_c )z(1)\theta_L}$ \eqref{scale}
    \[
    \frac{\tilde\eta_1}{C_L} \stackrel{d}{\longrightarrow} \begin{cases}
        0\ &,\mbox{ with probability }\blue{\rho_c /\rho}\\
        Z\ &,\mbox{ with probability }(\rho -\rho_c )/\rho
    \end{cases} \ ,
    \]
    where $Z\sim\Gamma_{2,1}$ is a Gamma-distributed random variable with $\mathbb P[Z\leq u] =\int_0^u se^{-s} ds$.\\
    For any fixed $m\in\N$, $\tilde\eta_1 ,\ldots ,\tilde\eta_m$ converge in distribution to iid random variables with the above marginal distribution.\\
\end{theorem}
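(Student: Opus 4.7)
The plan is to apply the explicit marginal formula \eqref{sb} and the product decomposition \eqref{pf2}, using the asymptotics of the partition function $Z_{L,N}$ that are developed in the next subsection. I would first prove the one-dimensional limit for $\tilde\eta_1/C_L$, then lift to joint convergence by induction on $m$.

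For the marginal limit, split the range of $\tilde\eta_1$ into the bulk values $\{1,\ldots,A-1\}$ and the condensate values $\{A,A+1,\ldots\}$. On the bulk event there are only finitely many atoms, and Theorem \ref{thm1} together with $\pi_{L,N}[\tilde\eta_1=n]=(Ln/N)\pi_{L,N}[\eta_x=n]$ gives
\begin{equation*}
\sum_{n=1}^{A-1}\pi_{L,N}[\tilde\eta_1=n]\;\to\;\frac{1}{\rho}\sum_{n=1}^{A-1} n\,\nu_1[\eta_x=n]=\frac{\rho_c}{\rho},
\end{equation*}
so on this event $\tilde\eta_1/C_L\to 0$, accounting for the atom at zero in the limit. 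On the condensate event, set $n=\lfloor uC_L\rfloor$ with $u>0$; then $w_L(n)=1/\theta_L$ and $C_L/\theta_L=(\rho-\rho_c)z(1)/C_L$ from \eqref{scale}, so \eqref{sb} reads
\begin{equation*}
\pi_{L,N}[\tilde\eta_1=\lfloor uC_L\rfloor]\;\simeq\;\frac{u\,(\rho-\rho_c)\,z(1)}{\rho\,C_L}\,\frac{Z_{L-1,N-\lfloor uC_L\rfloor}}{Z_{L,N}}.
\end{equation*}
Assuming the local limit $Z_{L-1,N-\lfloor uC_L\rfloor}/Z_{L,N}\to e^{-u}/z(1)$ uniformly on compact $u$-intervals (the content of the partition-function analysis), the rescaled mass $C_L\,\pi_{L,N}[\tilde\eta_1=\lfloor uC_L\rfloor]$ converges to $\tfrac{\rho-\rho_c}{\rho}\,u\,e^{-u}$, which is exactly the $\Gamma_{2,1}$ density weighted by the condensate fraction. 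Tightness follows from the identity $\sum_{n\geq 1}n\,w_L(n)\,Z_{L-1,N-n}=(N/L)Z_{L,N}$ (i.e.\ total mass one of \eqref{sb}) together with a uniform upper bound on the same ratio.

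For the $m$-variate statement I would iterate \eqref{pf2}. Conditional on $\tilde\eta_1=n_1$, the tail $(\tilde\eta_2,\ldots,\tilde\eta_m)$ is a size-biased sample from $\pi_{L-1,N-n_1}$. Since $\gamma<2$ gives $C_L/L\to 0$, whenever $n_1=O(C_L)$ the reduced system satisfies $(N-n_1)/(L-1)\to\rho$ and $\theta_{L-1}/(L-1)^\gamma\to\Theta$, so it lies in the same thermodynamic regime \eqref{thermo} with the same parameters. Applying the one-dimensional limit to $\tilde\eta_2/C_L$ yields the same Gamma/atom mixture, and since the dependence on $n_1$ enters only through vanishing corrections, induction on $m$ combined with bounded continuous test functions produces the claimed iid limit.

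The main obstacle, where essentially all of the work sits, is the local limit $Z_{L-1,N-\lfloor uC_L\rfloor}/Z_{L,N}\to e^{-u}/z(1)$. Heuristically it factors as $\bigl(Z_{L-1,N}/Z_{L,N}\bigr)\cdot\bigl(Z_{L-1,N-\lfloor uC_L\rfloor}/Z_{L-1,N}\bigr)$: the first factor equals $\pi_{L,N}[\eta_x=0]$ and tends to $1/z(1)$ by equivalence of ensembles (Theorem \ref{thm1}), while the second is a telescoping product of canonical currents $\langle g\rangle_{L-1,N-k}\simeq\Phi_L(\rho)\simeq 1-1/C_L$ \eqref{gcancurr}, giving the exponential factor $e^{-u}$. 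Making this rigorous requires either a saddle-point analysis of the Cauchy representation $Z_{L,N}=\frac{1}{2\pi i}\oint z_L(\phi)^L\phi^{-N-1}\,d\phi$ at $\phi=\Phi_L(\rho)$, or a local central limit theorem for $L$ iid variables with the $L$-dependent weights \eqref{eq_weight}; the delicate point is uniform control in $u$ together with matching the asymptotics in the small-$n$ and large-$n$ regimes. Once that input is in place, the theorem follows by direct substitution.
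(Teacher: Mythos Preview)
Your proposal is correct and follows essentially the same route as the paper: the paper's Lemma \ref{lemma3} establishes the partition-function asymptotics via the representation \eqref{llt} and a local limit theorem for triangular arrays (your second suggested method), Proposition \ref{prop4} then derives the one-dimensional limit exactly as you outline by splitting into bulk and condensate contributions, and the joint statement is obtained by iterating \eqref{pf2}. The only cosmetic difference is that the paper works directly with the CDF $\pi_{L,N}[\tilde\eta_1\leq C_Lu]$ rather than the rescaled point masses, and derives the ratio $Z_{L-1,N-n}/Z_{L,N}\simeq e^{-s}/z(1)$ from the explicit form \eqref{eq:ZLN} of $Z_{L,N}$ rather than via your telescoping-current factorisation.
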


\begin{figure}
    \begin{center}
\mbox{\includegraphics[width=0.5\textwidth]{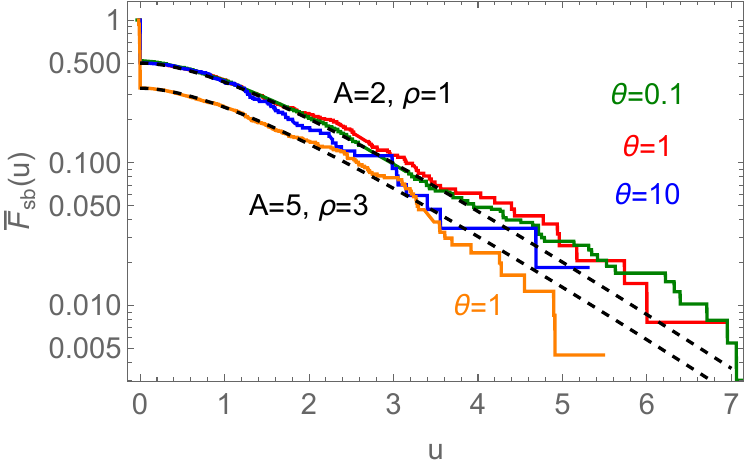}\quad\includegraphics[width=0.5\textwidth]{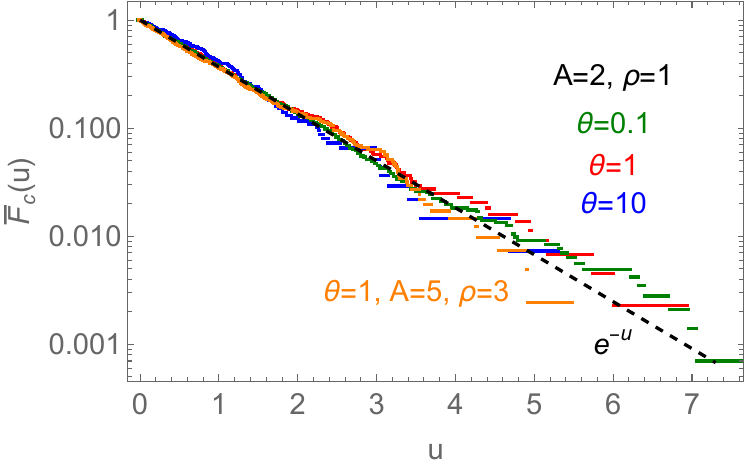}
    }
    \end{center}
    \caption{\label{fig:taila2}
    (Left) The tail of the size-biased empirical distribution $\overline{F}_{sb} (u):=\frac1N\sum_{x\in\Lambda} \eta_x \mathbbm{1}\{\eta_x >uC_L\}$ on scale $C_L$ \eqref{scale} for various values of $\Theta$ (coloured lines) compared to the theoretical prediction of Theorem \ref{thm2} (dashed black line) with $A=5$, $\rho =3$ or $A=2$, $\rho =1$. (Right) The corresponding tail of the conditioned empirical distribution of cluster sites $\overline{F}_c (u):=\sum_{x\in\Lambda} \mathbbm{1}\{\eta_x >uC_L\} \big/\sum_{x\in\Lambda} \mathbbm{1}\{\eta_x \geq A\}$ on scale $C_L$ \eqref{scale} shows exponential decay (dashed black line). \blue{Other parameters are $L=8192$ with $10$ realizations for $A=2$ and $L=4096$ with $100$ realizations for $A=5$.}
    }
\end{figure}

\noindent Note that $\Gamma_{2,1}$ is the size-biased distribution for an exponential random variable with mean $1$, which is therefore the asymptotic distribution of cluster sites consistent with \eqref{heuri}. This is illustrated in Figure \ref{fig:taila2} for the simple example \eqref{simple} with $A=2$. The scales of the cluster sites covered by our result in the limit \eqref{thermo} are of order
\[
C_L \propto \sqrt{\theta_L} \propto L^{\gamma /2} \ll L\quad\mbox{for }\gamma\in (0,2)\ .
\]
Since the condensate contains a mass of order $(\rho - \rho_c )L$, we have a diverging number of cluster sites of order $L^{1-\gamma /2}$ so that any finite collection of them is asymptotically independent.

If $\gamma =2$, clusters contain a non-zero fraction of the total mass and can no longer be independent in size. This borderline case is difficult to analyse exactly and we will get back to it in the discussion Section \ref{sec:conclusion}. For $\gamma >2$, we can show that all the mass in the condensate concentrates on a single cluster site.\\

\begin{theorem}\label{thm3}
    Consider the ZRP with stationary weights \eqref{eq_weight} in the thermodynamic limit \eqref{thermo} with density $\rho >\rho_c$ \eqref{rhoc} and $\gamma >2$. Then
    \[
    \frac{\tilde\eta_1}{(\rho -\rho_c)L} \stackrel{d}{\longrightarrow} \begin{cases}
        0\ &,\mbox{ with probability }\blue{\rho_c /\rho}\\
        1\ &,\mbox{ with probability }(\rho -\rho_c )/\rho
    \end{cases} \ .
    \]
    Furthermore, there is only a single cluster site in the limit,
    \[
    \sum_{x\in\Lambda} \mathbbm{1}\{\eta_x \geq A\}\stackrel{d}{\longrightarrow} 1\quad\mbox{as }L\to\infty\ .
    \]
\end{theorem}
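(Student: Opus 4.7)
The plan is to decompose the canonical partition function $Z_{L,N}$ according to the number $j$ of cluster sites (sites with $\eta_x\geq A$). Setting
\[
Z^{(j)}_{L,N}=\binom{L}{j}\theta_L^{-j}\!\!\!\sum_{\substack{m_1,\ldots,m_j\geq A\\ m_1+\cdots+m_j\leq N}}\!\! Z^{<A}_{L-j,\,N-m_1-\cdots-m_j}\,,
\]
where $Z^{<A}_{L',M}$ is the partition function built from the truncated weights $w(n)\mathbbm{1}\{n<A\}$, we have $Z_{L,N}=\sum_{j\geq 0}Z^{(j)}_{L,N}$. I would show that for $\gamma>2$ the term $j=1$ is asymptotically dominant, and then deduce both assertions of the theorem from equivalence of ensembles in the bulk combined with size-biased sampling.

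For the key ratio bounds, I would use the elementary counting estimate $\#\{(m_1,\ldots,m_j):m_i\geq A,\sum m_i=M\}\leq M^{j-1}$ together with $\sum_{M\geq 0}Z^{<A}_{L',M}=z(1)^{L'}$ to obtain $Z^{(j)}_{L,N}\leq \binom{L}{j}\theta_L^{-j}N^{j-1}z(1)^{L-j}$. For $j=1$ the sum $\sum_{k=0}^{N-A}Z^{<A}_{L-1,k}$ equals $z(1)^{L-1}(1-o(1))$, since by Cram\'er the excluded range $k>N-A$ lies above the typical bulk mass $(L-1)\rho_c$ and contributes only exponentially small probability. Consequently for any $j\geq 2$,
\[
\frac{Z^{(j)}_{L,N}}{Z^{(1)}_{L,N}}\leq C\Big(\frac{LN}{\theta_L\, z(1)}\Big)^{j-1}\sim C'\Big(\frac{L^2}{\Theta\, L^\gamma}\Big)^{j-1}\to 0
\]
when $\gamma>2$, and a further Cram\'er estimate applied to $Z^{(0)}_{L,N}=Z^{<A}_{L,N}$ yields exponential decay in $L$ relative to $Z^{(1)}_{L,N}$ since $\rho>\rho_c$. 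This gives $\pi_{L,N}[|\{x:\eta_x\geq A\}|=1]\to 1$, which is the second assertion of the theorem.

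For the first assertion I would condition on the event of exactly one cluster site $x$. On this event the conditional law of the cluster mass $\eta_x$ is proportional to $Z^{<A}_{L-1,N-m}$ for $A\leq m\leq N$, and a local central limit theorem for the bounded iid bulk variables (mean $\rho_c$, finite variance) concentrates this distribution at $m^\ast=N-\rho_c(L-1)\sim(\rho-\rho_c)L$ with Gaussian fluctuations of order $\sqrt{L}$, so $\eta_x/((\rho-\rho_c)L)\to 1$ in probability. Size-biased sampling then selects this cluster site with probability $\eta_x/N\to(\rho-\rho_c)/\rho$, giving $\tilde\eta_1/((\rho-\rho_c)L)\to 1$, while with the complementary probability $\rho_c/\rho$ a bulk site with $\tilde\eta_1\leq A-1$ is picked and $\tilde\eta_1/((\rho-\rho_c)L)\to 0$. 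The main obstacle will be to make the local-CLT control on $Z^{<A}_{L',M}$ uniform enough across the relevant mass range; this is standard equivalence-of-ensembles machinery, and in this setting it should proceed along the same lines as the proof of Theorem \ref{thm2}.
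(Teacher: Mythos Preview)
Your proposal is correct and follows essentially the same route as the paper: decompose $Z_{L,N}$ according to the number of cluster sites, show the $j=1$ contribution dominates using $L^2/\theta_L\to 0$, and conclude both statements from there. The paper's Lemma~5 gives precisely $Z_{L,N}\simeq z(1)^{L-1}L/\theta_L$ via the same decomposition, bounding the $K\geq 2$ remainder by a convergent exponential series just as you do.

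One minor difference worth noting: for the upper bound on $Z^{(j)}_{L,N}$ the paper first invokes equivalence of ensembles to localize the condensed mass at $M\approx(\rho-\rho_c)L$ and then bounds the combinatorial factor $\binom{M-AK+K-1}{K-1}$, whereas your route via the crude count $\leq M^{j-1}$ followed by the exact identity $\sum_M Z^{<A}_{L',M}=z(1)^{L'}$ is slightly more direct and avoids that localization step for the upper bound. Conversely, the paper is terser on the first assertion (it simply remarks that a single cluster site implies the size-biased statement), while you spell out the local-CLT concentration of the cluster mass and the size-biased selection probability; both are fine.
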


\blue{The first statement is written in analogy to Theorem \ref{thm2} and implies that there is only one macroscopic cluster with mass on scale $L$. The second statement is stronger, stating that all but one site have occupation numbers smaller than $A$.}

\subsection{Proof of the main results}

\subsubsection*{Proof of Theorem \ref{thm1}}
In the limit \eqref{thermo}, we have in particular that the weights $w_L$ \eqref{eq_weight} converge uniformly to $w:\N_0 \to [0,\infty )$ with $w(0),w(1)>0$ and show a sub-exponential decay, i.e.
\[
\frac{1}{L}\log w_L (aL) \simeq -\frac1L \log (\theta_L )\to 0\quad\mbox{as }L\to\infty \mbox{ for all }a>0\ .
\]
Therefore we are in the situation of Appendix A in \cite{chleboun2022poisson}, where Proposition A.1 implies the equivalence of ensembles, i.e. for any finite marginal supported on $M\subset\Lambda$ with $|M|=m$
\begin{equation}\label{equi}
\pi_{L,N}^M \to \begin{cases}
    \nu_\phi^m &,\mbox{ for } R(\phi)=\rho <\rho_c \\
    \nu_1^m &,\mbox{ for }\rho \geq\rho_c
\end{cases}\ .
\end{equation}
Here $\pi_{L,N}$ are the canonical measures \eqref{wpi} and the marginals of the limit measures $\nu_\phi$ derived from \eqref{nu} are
\[
\nu_\phi [\eta_x =k] =\frac{1}{z(\phi )}\phi^k w(k)\ ,\quad k\in\{ 0,\ldots ,A-1\} \quad\mbox{for any fixed }A\geq 1\ ,
\]
with $z(\phi )$ as in \eqref{rzlim}. For $\rho >\rho_c$ and $\phi =1$, this immediately implies Theorem \ref{thm1}.

\enlargethispage{5mm}
\subsubsection*{Proof of Theorem \ref{thm2}}
We can use an analogous approach to Appendix A in \cite{chleboun2022poisson} to derive the asymptotic behaviour of the partition function $Z_{L,N}$.\\

\begin{lemma}\label{lemma3}
In the thermodynamic limit \eqref{thermo} with $\rho >\rho_c$ and $\gamma\in (0,2)$
\begin{equation}
\label{eq:ZLN}
    Z_{L,N} =  (z(1))^L D_L (\rho ) e^{L/\sqrt{\theta_L} f(\rho )} \big(1+o(1)\big)\ ,
\end{equation}
where $f(\rho )=2\sqrt{(\rho -\rho_c )/z(1)}$ and $D_L (\rho )$ is sub-exponential, such that
\begin{equation}\label{mlcond}
\frac{D_L (\rho )}{D_{L-1} (\rho -o(1))} \to 1\quad\mbox{as }L\to\infty\ .    
\end{equation}
\end{lemma}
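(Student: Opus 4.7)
The plan is to exploit the standard identity
$$
Z_{L,N} = \phi^{-N}\,z_L(\phi)^L\,\nu_\phi^L[N_L = N],\qquad N_L:=\sum_{x\in\Lambda}\eta_x,
$$
which holds for any $\phi\in(0,1)$, and to choose a saddle-point value $\phi^*=1-\Delta_L$ determined by the density equation $R_L(\phi^*)=N/L$. Making the heuristic of Section~\ref{sec:heuristics} rigorous pins down $\Delta_L = C_L^{-1}(1+o(1))$, so the problem splits into (i) an expansion of the free energy $L\log z_L(\phi^*) - N\log\phi^*$ that produces the factor $\exp\bigl(Lf(\rho)/\sqrt{\theta_L}\bigr)$, and (ii) a local-CLT estimate for $\nu_{\phi^*}^L[N_L=N]$ that yields the sub-exponential prefactor $D_L(\rho)$.

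For (i), I would Taylor-expand $z_L(\phi) = z(\phi) + \theta_L^{-1}\phi^A/(1-\phi)$ around $\phi=1$, use $z'(1)/z(1) = \rho_c$ from \eqref{extend}, and collect terms to obtain
$$
L\log z_L(\phi^*) - N\log\phi^* = L\log z(1) + (\rho-\rho_c)L\,\Delta_L + \frac{L}{z(1)\,\theta_L\,\Delta_L} + O(L/\theta_L).
$$
The saddle-point relation forces the two $\Delta_L$-dependent terms to coincide, and their common value equals $\tfrac{1}{2}Lf(\rho)/\sqrt{\theta_L}$ with $f(\rho) = 2\sqrt{(\rho-\rho_c)/z(1)}$, while the remainder $O(L/\theta_L) = O(L^{1-\gamma})$ is of strictly lower order than $L/\sqrt{\theta_L} = O(L^{1-\gamma/2})$ throughout $\gamma\in(0,2)$.

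For (ii), I would treat $N_L$ under $\nu_{\phi^*}^L$ as a sum of $L$ iid integer-valued random variables whose law depends on $L$ and whose mean is tuned to $N/L$. A direct computation gives single-site variance $\sigma_L^2 \sim c(\rho)\sqrt{\theta_L}$, with the dominant contribution coming from the geometric tail on scale $C_L$, and a local central limit theorem then yields $\nu_{\phi^*}^L[N_L=N]\sim (2\pi L\sigma_L^2)^{-1/2}$, which defines $D_L(\rho)$ up to asymptotically negligible factors. The main technical obstacle is that the per-site distribution is not fixed but varies with $L$ on the diverging scale $C_L$; this is precisely the difficulty addressed in Appendix~A of \cite{chleboun2022poisson}, whose truncation-plus-characteristic-function strategy transfers to the present setting once the relevant uniform moment bounds are verified. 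Finally, the consistency relation \eqref{mlcond} follows because $D_L(\rho) \propto (L\sigma_L^2)^{-1/2}$ is slowly varying in $L$ and continuous in $\rho$, so that the simultaneous shift $L\to L-1$, $\rho\to\rho-o(1)$ leaves it asymptotically unchanged.
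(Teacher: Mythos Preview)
Your proposal is correct and follows essentially the same route as the paper: the same tilted representation $Z_{L,N}=\Phi_L(\rho)^{-N}z_L(\Phi_L(\rho))^L\,\nu_{\Phi_L(\rho)}^L[N_L=N]$ with the density-matching fugacity, the same asymptotic expansion of the deterministic factors to extract $z(1)^L e^{Lf(\rho)/\sqrt{\theta_L}}$, and the same appeal to a local limit theorem for triangular arrays (Appendix~A of \cite{chleboun2022poisson}) for the sub-exponential prefactor. The only difference is cosmetic: the paper computes $\Phi_L(\rho)^{-N}$ and $z_L(\Phi_L(\rho))^L$ separately rather than combining them into a single free-energy expansion, and it simply \emph{defines} $D_L(\rho)=\nu_{\Phi_L(\rho)}^L[N_L=N]$ without making the variance scaling $\sigma_L^2\sim c(\rho)\sqrt{\theta_L}$ explicit as you do.
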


\begin{proof}
Tilting the stationary weights $w_L$ \eqref{eq_weight} by $\Phi_L (\rho )$ as in \eqref{gcancurr} we can write with \eqref{eq: weights}
\begin{equation}\label{llt}
Z_{L,N} =\nu_{\Phi_L (\rho )}^L \Big[\sum_{x\in\Lambda} \eta_x =N\Big] \Phi_L (\rho )^{-N} z_L \big(\Phi_L (\rho )\big)^L \ .
\end{equation}
\blue{Here $\nu_{\Phi_L (\rho )}^L$ is the grand-canonical product measure \eqref{nu} with fugacity chosen such that the density \eqref{rhol} equals $\rho >\rho_c$. Naturally, $D_L (\rho )=\nu_{\Phi_L (\rho )}^L \Big[\sum_{x\in\Lambda} \eta_x =N\Big]$ decays with increasing system size $L$, in general exponentially fast. But with $N/L\to\rho$ the event $\sum_{x\in\Lambda} \eta_x =N$ is \textit{typical} under the measure $\nu_{\Phi_L (\rho )}^L$, i.e.\ the probability decays significantly slower than exponential in analogy to the standard central limit theorem, where it decays like $1/\sqrt{L}$. In our case, the distribution $\nu_{\Phi_L (\rho )}$ of a single term $\eta_x$ in the sum depends on the number of summands $L$. Still the above heuristic holds and is made precise in a \textit{local limit theorem} for so-called \textit{triangular arrays} (see e.g.\ Lemma A.3 in \cite{chleboun2022poisson} and Theorem 1.2 in \cite{davis1995elementary}), so that $D_L (\rho )$ decays only sub-exponentially with $L$ and in particular \eqref{mlcond} holds.}\\
Using $\Phi_L (\rho )=1-1/C_L$ with $C_L =\sqrt{(\rho -\rho_c )z(1)\theta_L}$ \eqref{scale}, a straightforward asymptotic analysis of \eqref{gcancurr} and \eqref{zlphi} yields
\begin{align*}
\Phi_L (\rho )^{-N} &=\Big( 1-\frac{N}{N\, C_L}\Big)^{-N} \simeq e^{N/C_L} =\exp\Big(\frac{L}{\sqrt{\theta_L}} \frac{\rho}{\sqrt{(\rho -\rho_c )z(1)}}\Big)\\
z_L \big(\Phi_L (\rho )\big)^L &\simeq z(1)^L \Big(1-\frac{\rho_c}{C_L}+\frac{C_L}{\theta_L z(1)}\Big)^L \simeq \exp\Big(\frac{L}{\sqrt{\theta_L}} \frac{\rho -2\rho_c}{\sqrt{(\rho -\rho_c )z(1)}}\Big)
\end{align*}
which implies \eqref{eq:ZLN}.
\end{proof}




\begin{proposition}\label{prop4}
On the scale $C_L =\sqrt{(\rho -\rho_c )\theta_L z(1)}$ \eqref{scale}, we get
\begin{equation}
\pi_{L,N}\left[ \tilde{\eta}_1 \leq C_L u\right] \to\frac{\rho_c}{\rho}+ \frac{\rho-\rho_c}{\rho}  \int_0^u se^{-s}\; ds\ ,
\end{equation}
for all $u>0$ in the thermodynamic limit \eqref{thermo} with $\rho >\rho_c$ and $\gamma\in (0,2)$.
\end{proposition}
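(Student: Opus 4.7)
The plan is to start from the size-biased marginal formula \eqref{sb} and decompose
\begin{equation*}
\pi_{L,N}[\tilde\eta_1 \leq C_L u] = \sum_{n=1}^{A-1} \frac{L}{N} n\, w(n)\frac{Z_{L-1,N-n}}{Z_{L,N}} + \sum_{n=A}^{\lfloor C_L u\rfloor} \frac{L\, n}{N\theta_L} \frac{Z_{L-1,N-n}}{Z_{L,N}}
\end{equation*}
into a bulk part (where $w_L(n)=w(n)$) and a condensate part (where $w_L(n)=1/\theta_L$). The key ingredient is a uniform asymptotic for the ratio $Z_{L-1,N-n}/Z_{L,N}$ over $n\in\{1,\ldots,\lfloor C_L u\rfloor\}$ obtained by inserting Lemma \ref{lemma3} into numerator and denominator.

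For this ratio, the sub-exponential factor $D_{L-1}\big((N{-}n)/(L{-}1)\big)/D_L(\rho )$ tends to $1$ by \eqref{mlcond}, since $n/L\to 0$ uniformly in the relevant range (recall $n\leq C_L u = O(L^{\gamma /2})$ with $\gamma <2$). For the exponential factor, a first-order Taylor expansion of $f(\rho )=2\sqrt{(\rho -\rho_c )/z(1)}$ around $\rho$, using $N=\rho L+o(L)$, yields
\begin{equation*}
(L-1)\, f\big((N-n)/(L-1)\big)- L\, f(\rho ) = -f'(\rho )\, n + O(n^2 /L)+o(1)\, ,
\end{equation*}
with $f'(\rho )=1/\sqrt{(\rho -\rho_c )z(1)}$. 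Dividing by $\sqrt{\theta_L}$, the linear term becomes $-n/C_L$ by the very definition $C_L =\sqrt{(\rho -\rho_c )z(1)\theta_L}$, while the remainder is $O(n^2 /(L\sqrt{\theta_L}))=O(L^{\gamma /2 -1})\to 0$ precisely because $\gamma <2$. Hence
\begin{equation*}
\frac{Z_{L-1,N-n}}{Z_{L,N}} = \frac{1}{z(1)}\, e^{-n/C_L}\,\big(1+o(1)\big)\quad\text{uniformly in }n\in\{1,\ldots ,\lfloor C_L u\rfloor\}\, .
\end{equation*}

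It remains to sum the two pieces. For the bulk, $n$ is fixed so $e^{-n/C_L}\to 1$, giving $\sum_{n=1}^{A-1}\pi_{L,N}[\tilde\eta_1 =n]\to\frac{1}{\rho\, z(1)}\sum_{n=1}^{A-1} n\, w(n) =\rho_c /\rho$ by \eqref{rhoc}. For the condensate I substitute $s=n/C_L$ (step $1/C_L$) and recognize a Riemann sum, using $C_L^2 =(\rho -\rho_c )z(1)\theta_L$:
\begin{align*}
\sum_{n=A}^{\lfloor C_L u\rfloor}\frac{L\, n}{N\theta_L z(1)}\, e^{-n/C_L} &\simeq \frac{L\, C_L^2}{N\theta_L z(1)}\int_0^u s\, e^{-s}\, ds \\
&= \frac{L(\rho -\rho_c )}{N}\int_0^u s\, e^{-s}\, ds \;\longrightarrow\; \frac{\rho -\rho_c}{\rho}\int_0^u s\, e^{-s}\, ds\, .
\end{align*}
Adding the two limits yields the claimed expression.

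The main obstacle I expect is establishing the one-site asymptotic for $Z_{L-1,N-n}/Z_{L,N}$ \emph{uniformly} over $n\leq C_L u$: the quadratic Taylor error is $O(L^{\gamma /2 -1})$, which is what pins the statement to $\gamma <2$, and \eqref{mlcond} must be invoked along a varying density $\rho ' =\rho +O(n/L)$ rather than at $\rho$ itself. Once this uniformity is secured, both the finite bulk sum and the Riemann-sum limit in the condensate follow by term-by-term insertion of the asymptotic.
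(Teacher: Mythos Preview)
Your proposal is correct and follows essentially the same approach as the paper's proof: both split the size-biased sum \eqref{sb} into a bulk part ($n<A$) and a condensate part ($A\leq n\leq C_L u$), use Lemma~\ref{lemma3} together with a first-order expansion of $f$ to obtain $Z_{L-1,N-n}/Z_{L,N}\simeq z(1)^{-1}e^{-n/C_L}$, and then pass to a Riemann sum. If anything, you are more explicit than the paper about the uniformity in $n$ and about how the quadratic Taylor remainder $O(L^{\gamma/2-1})$ forces the restriction $\gamma<2$.
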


\begin{proof}
With \eqref{sb}, we get for any $u>0$
\begin{align*}
\pi_{L,N}\left[ \tilde{\eta}_1 \leq C_L u\right] 
    &= \sum_{n\leq C_L u} \frac{L}{N}  n w_L(n) \frac{Z_{L-1, N-n}}{Z_{L,N}} \\
    &\simeq \frac{1}{\rho} \sum_{n=0}^{A-1} n w(n) \frac{Z_{L-1, N-n}}{Z_{L,N}}+\frac{1}{\rho} \sum_{n=A}^{C_L u} n \frac{1}{\theta_L}  \frac{Z_{L-1, N-n}}{Z_{L,N}} \ .
\end{align*}
With Lemma \ref{lemma3}, we get for $n=O(1)$, $\frac{Z_{L-1, N-n}}{Z_{L,N}}\simeq \frac{1}{z(1)}$, and for $n=C_L s$, $s>0$
\[
\frac{Z_{L-1, N-n}}{Z_{L,N}}\simeq \frac{1}{z(1)}e^{L/\sqrt{\theta_L} (f(\rho -n/L )-f(\rho ))} \simeq \frac{1}{z(1)}e^{-L/\sqrt{\theta_L } f'(\rho) C_L s/L}\simeq \frac{1}{z(1)}e^{-s}
\]
since $f'(\rho)=1/\sqrt{(\rho -\rho_c)z(1)} =\sqrt{\theta_L}/C_L$. Therefore
\[
\pi_{L,N}\left[ \tilde{\eta}_1 \leq C_L u\right] 
    \simeq \frac{\rho_c}{\rho} +\frac{1}{\rho} \frac{C_L^2}{\theta_L z(1)}  \int_0^u se^{-s} ds\to\frac{\rho_c}{\rho}+ \frac{\rho-\rho_c}{\rho}  \int_0^u se^{-s}\, ds\ ,
\]
as $L\to\infty$, finishing the proof.
\end{proof}




\noindent Using the factorization \eqref{pf2} of the joint distribution, we get
\begin{align*}
&\pi_{L,N}\Big[ \tilde{\eta}_1 \leq C_L u_1,\ldots ,\tilde{\eta}_m \leq C_L u_m \Big] =\sum_{n_1 \leq C_L u_1} \!\!\!\ldots\!\!\! \sum_{n_m \leq C_L u_m} \pi_{L,N} [\tilde\eta_1 =n_1 ,\ldots ,\tilde\eta_m =n_m ]\\
&\quad =\sum_{n_1 \leq C_L u_1}\pi_{L,N} [\tilde\eta_1 =n_1 ]\sum_{n_2 \leq C_L u_2} \pi_{L-1,N-n_1} [\tilde\eta_2 =n_2 ]\\
&\qquad\qquad\qquad\qquad\qquad\qquad\qquad \cdots \sum_{n_m \leq C_L u_m}\pi_{L-m+1,N-(n_1 +..+n_{m-1})} [\tilde\eta_m =n_m ]\\
    &\quad\to \left(\frac{\rho_c}{\rho}+ \frac{\rho-\rho_c}{\rho}  \int_0^{u_1} se^{-s}\; ds\right)\cdots \left(\frac{\rho_c}{\rho}+ \frac{\rho-\rho_c}{\rho}  \int_0^{u_m} se^{-s}\; ds\right)\ ,
\end{align*}
since $m$ is finite and $C_L (u_1 +\ldots +u_m) \ll N$ in the limit \eqref{thermo} with $\gamma\in (0,2)$, so we can apply Proposition \ref{prop4} repeatedly in the iterated sum. This concludes the proof of Theorem \ref{thm2}.

\subsubsection*{Proof of Theorem \ref{thm3}}
For $\gamma >2$, we can get the following elementary estimate on the partition function.\\

\begin{lemma}
    In the thermodynamic limit \eqref{thermo} with $\rho >\rho_c$ and $\gamma >2$
    \[
    Z_{L,N} = z(1)^{L-1} \frac{L}{\theta_L} \big( 1+o(1)\big)\ .
    \]
\end{lemma}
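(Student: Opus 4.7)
The plan is to decompose $Z_{L,N}$ according to the number $k$ of \emph{cluster sites} (sites with $\eta_x \geq A$) and show that the $k=1$ term dominates. Let $Z^{bulk}_{L',N'}$ denote the partition function restricted to configurations with $\eta_x \leq A-1$ at every site. Since each cluster site contributes weight $1/\theta_L$ independently of its occupation, choosing which $k$ of the $L$ sites carry the clusters and counting compositions of the cluster mass $M$ into $k$ ordered parts each $\geq A$ gives
\begin{equation*}
Z_{L,N} \;=\; Z^{bulk}_{L,N} \;+\; \sum_{k=1}^{L} \binom{L}{k}\theta_L^{-k}\sum_{M \geq kA} \binom{M-k(A-1)-1}{k-1}\, Z^{bulk}_{L-k,N-M}\, .
\end{equation*}

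The first step is to extract the $k=1$ contribution $\frac{L}{\theta_L}\sum_{M=0}^{N-A} Z^{bulk}_{L-1,M}$. The generating identity $\sum_{M\geq 0} Z^{bulk}_{L-1,M}\phi^M = z(\phi)^{L-1}$ with $z(\phi) = \sum_{n=0}^{A-1} w(n)\phi^n$, evaluated at $\phi = 1$, gives the unrestricted total $z(1)^{L-1}$. Under the bulk grand-canonical measure $\nu_1$, single-site occupations are bounded with mean $\rho_c$ and bounded variance, so a standard Cram\'er bound shows that the missing tail $\sum_{M > N-A} Z^{bulk}_{L-1,M}$ — which corresponds to empirical bulk densities above $\rho > \rho_c$ — is exponentially small in $L$ relative to $z(1)^{L-1}$. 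Hence the $k=1$ term equals $\frac{L}{\theta_L}z(1)^{L-1}(1+o(1))$, matching the claim.

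The second step bounds the $k \geq 2$ terms using the crude estimates $\binom{L}{k} \leq L^k/k!$, $\binom{M-k(A-1)-1}{k-1} \leq N^{k-1}/(k-1)!$, and $\sum_M Z^{bulk}_{L-k,N-M} \leq z(1)^{L-k}$, which yield
\begin{equation*}
\binom{L}{k}\theta_L^{-k}\sum_{M\geq kA}\binom{M-k(A-1)-1}{k-1} Z^{bulk}_{L-k,N-M} \;\leq\; \frac{L}{\theta_L}z(1)^{L-1}\cdot \frac{1}{k!(k-1)!}\Big(\frac{LN}{\theta_L z(1)}\Big)^{k-1}.
\end{equation*}
Since $N \sim \rho L$ and $\theta_L/L^\gamma \to \Theta > 0$ with $\gamma > 2$, the parenthetical factor is $O(L^{2-\gamma}) = o(1)$ uniformly in $k$, and the factorial prefactors make the sum over $k \geq 2$ absolutely convergent, so its total is $o(1)$ times the $k=1$ term. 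Finally, the $k = 0$ contribution $Z^{bulk}_{L,N}$ vanishes if $N > L(A-1)$ and is otherwise exponentially suppressed in $L$ by the same Cram\'er argument (since $\rho > \rho_c$), hence negligible. Combining the three estimates gives the stated asymptotic.

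The principal obstacle is the large-deviation tail bound needed in the first and third steps: one must verify that replacing the truncated sum $\sum_{M \leq N-A}Z^{bulk}_{L-1,M}$ by the full sum $z(1)^{L-1}$ incurs only a multiplicative error $1+o(1)$. This is standard for i.i.d.\ sums of bounded random variables (bulk occupations lie in $\{0,\ldots,A-1\}$ under $\nu_1$ with mean $\rho_c < \rho$), but is the one genuinely analytic input of the proof; the remainder is elementary combinatorial accounting that exploits the crucial simplification $w_L(n) = 1/\theta_L$ for all $n \geq A$.
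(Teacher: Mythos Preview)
Your proposal is correct and follows essentially the same route as the paper: decompose $Z_{L,N}$ according to the number $k$ of cluster sites, show the $k=1$ term equals $\frac{L}{\theta_L}z(1)^{L-1}(1+o(1))$, and bound the $k\geq 2$ contributions by the identical elementary estimates $\binom{L}{k}\leq L^k/k!$ and $\binom{\,\cdot\,}{k-1}\leq N^{k-1}/(k-1)!$, summing a geometric-type series in $L^2/\theta_L\to 0$.

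The one substantive difference is in how the $k=1$ term (and the $k=0$ tail) is handled. The paper invokes the equivalence of ensembles already established in Theorem~\ref{thm1} to argue somewhat informally that the sum over $M$ localises at $M\simeq(\rho-\rho_c)L$, whence $Z^b_{L-1,\rho_cL}\simeq z(1)^{L-1}$. You instead use the exact identity $\sum_{M\geq 0}Z^{bulk}_{L-1,M}=z(1)^{L-1}$ and dispose of the tail $M>N-A$ by a Cram\'er bound for the bounded i.i.d.\ bulk occupations. Your version is more self-contained and arguably cleaner, since it avoids appealing to the heavier equivalence-of-ensembles machinery and never needs to localise $M$; the paper's version is shorter because that machinery is already in hand. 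Both lead to the same combinatorial endgame.
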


\begin{proof}
    Let $K$ denote the number of cluster sites with occupation $\eta_x \geq A$ and $M$ the total mass in the condensed phase, then we can write the partition function in a phase separated form 
    \[
    Z_{L,N} =\sum_{M=0}^N \sum_{K=0}^L {L\choose K} Z^b_{L-K,N-M} Z^c_{K,M} \ ,
    \]
    \blue{where $Z^b_{L-K,N-M}$ and $Z^c_{K,M}$ will be defined and explained below.}
    Here, ${L\choose K}$ is the number of ways to choose $K$ condensed sites within the total volume $|\Lambda |=L$ and there are no ``interface contributions" since the stationary weights \eqref{eq_weight} are of product form. 
For the condensed part $Z^c_{K,M}$, we have constant weights $1/\theta_L$ \blue{for each site with $\eta_x \geq A$ leading to a uniform distribution of mass. Thus,}
\[
Z^c_{K,M} =\frac{1}{\theta_L^K} {M-AK+K-1\choose K-1}\quad\mbox{for }M\geq AK
\]
\blue{where the combinatorial factor} counts the number of possible configurations. From the equivalence of ensembles \eqref{equi} and Theorem \ref{thm1}, we know that in the limit \eqref{thermo} with $\rho >\rho_c$\blue{,} the sum over $M$ is dominated by terms with $M=(\rho -\rho_c )L$, $K\geq 1$ and that the bulk part is asymptotically to leading order
    \[
    Z^b_{L-K,\rho_c L} \simeq z(1)^{L-K}
    \]
    with grand-canonical partition function as given in \eqref{rzlim}. Together we get
    \begin{equation}\label{partk}
    Z_{L,N} \simeq z(1)^{L-1} \frac{L}{\theta_L} +\sum_{K=2}^L \frac{z(1)^{L-K}}{\theta_L^K} R_L (K)
    \end{equation}
    with remainder terms
    \begin{equation}\label{remainder}
    R_L (K)={L\choose K}{N-\rho_c L -AK+K-1\choose K-1}\leq\frac{L^K}{K!}\frac{(\rho -\rho_c )^{K-1} L^{K-1}}{(K-1)!}\ .
    \end{equation}
    Therefore with $L^2 \ll\theta_L$
    \[
    \frac{1}{\theta_L}\sum_{K=2}^L z(1)^{L-K} R_L (K) \leq z(1)^{L-1} \frac{L}{\theta_L} \underbrace{\frac{L^2}{\theta_L} \frac{\rho -\rho_c }{z(1)}}_{\to 0}\underbrace{\sum_{K=0}^\infty \Big(\frac{L^2 (\rho -\rho_c)}{\theta_L z(1)}\Big)^K \frac{1}{K!}}_{=e^{\frac{L^2 (\rho -\rho_c)}{\theta_L z(1)}}\to 1}\ ,
    \]
    which implies\quad $Z_{L,N} \simeq z(1)^{L-1} \frac{L}{\theta_L}$\quad as required.
\end{proof}

Therefore $Z_{L,N}$ is asymptotically dominated by configurations with a single cluster site, so that
\[
\pi_{L,N} \bigg[\sum_{x\in\Lambda}\mathbbm{1}\{\eta_x \geq A\} =1\bigg] =\frac{1}{Z_{L,N}} \sum_{\eta\in E_{L,N}} \prod_{x\in\Lambda} w_L (\eta_x ) \mathbbm{1}\Big\{\sum_{x\in\Lambda}\mathbbm{1}\{\eta_x \geq A\} =1\Big\}\to 1
\]
in the limit \eqref{thermo} with $\rho >\rho_c$ and $\gamma >2$. This proves the second statement of Theorem \ref{thm3} and implies in particular also the first statement.\\

\subsection{Fixed lattice size\label{sec:fixedL}}

Condensation in particle systems has also been studied in a different limit, where the lattice size $L$ remains fixed and only the number of particles $N$ tends to infinity for zero-range processes \cite{ferrari2007escape}, inclusion processes \cite{grosskinsky2011condensation} and other more general models (see e.g. \cite{rafferty2018monotonicity}). This is also the scaling for most dynamical results on metastability  \cite{beltran2012metastability, grosskinsky2013dynamics} and on equilibration dynamics \cite{beltran2017martingale,armendariz2023fluid}. 
In the present model, this scaling regime also leads to condensation when $\theta =\theta_N$ is scaled with the number of particles. Since the density $N/L$ diverges in this regime, the nature of the transition changes and we can have only a single condensate site on scale $N$ as seen below. So the structure of the condensed phase is less interesting than in the thermodynamic limit, but we include a brief outline of this case for completeness.

For a given configuration $\eta$, we let  
\blue{\[ M_N =\max(\eta_x :x\in\Lambda )\in\N\quad\text{and}\quad X_N =\{ x:\eta_x =M_N \}\subset\Lambda \]}be the size and location(s) of the maximum. \blue{In the following, the system parameter $\theta_N$ replaces $\theta_L$ and scales with the diverging number of particles $N$ since $L$ remains fixed.}\\

\begin{theorem}\label{thm4}
    We consider the zero-range process with stationary weights \eqref{eq_weight} in the scaling limit
    \begin{equation}\label{lscaling}
    L\geq 2\mbox{ fixed },\quad N,\theta_N \to\infty\quad\mbox{such that}\quad \frac{\theta_N}{N^\gamma}\to\Theta >0\mbox{ for some }\gamma >0\ .        
    \end{equation}
    Then, for $\gamma >1$, we have condensation of the full mass fraction on a single uniformly chosen site
    \[
        \frac1N M_N \stackrel{d}{\longrightarrow} 1 \quad\mbox{and}\quad X_N \stackrel{d}{\longrightarrow}\{ X\},\ X\in\Lambda\mbox{ uniform }\ ,
    \]
    and the bulk sites converge to iid variables with distribution (cf.\ Theorem \ref{thm1})
    \[
    \pi_{L,N} [\eta_x =n_x :x\in\Lambda\setminus X_N ]\to \frac{1}{z(1)^{L-1}}\prod_{x\in\Lambda\setminus X_N} w(n_x )\quad\mbox{for all }n_x \in\{ 0,\ldots ,A-1\}\ ,
    \]
    where as before $z(1)=w(0)+\ldots +w(A-1).$\\

For $\gamma <1$, all occupation numbers diverge on the scale $N$ with
\begin{equation}\label{zxlim}
\frac{\eta_x}{N}\stackrel{d}{\longrightarrow} Z_x \in [0,1]\quad\mbox{with}\quad\mathbb P [Z_x \leq u]=1-(1-u)^{L-1}\ .
\end{equation}
We have $\sum_{x\in\Lambda} Z_x =1$, so the limits are not independent.\\
\end{theorem}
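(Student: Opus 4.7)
The plan is to exploit the phase-separated decomposition of the partition function that was used in the proof of the $\gamma>2$ lemma. For fixed $L$ and large $N$, we write
\[
Z_{L,N}=\sum_{K=0}^L \binom{L}{K}\sum_{M} Z^b_{L-K,N-M}\, Z^c_{K,M},\qquad Z^c_{K,M}=\theta_N^{-K}\binom{M-AK+K-1}{K-1},
\]
where $Z^b_{L-K,N-M}$ is supported on $N-M\leq (L-K)(A-1)$ and, summed over $M$, equals $z(1)^{L-K}$ for $N$ large enough. The whole argument then reduces to identifying which value of $K$ dominates the sum in each regime: $K=1$ when $\gamma>1$, and $K=L$ when $\gamma<1$.

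For $\gamma>1$ a direct ratio estimate shows that the $K=k$ contribution is of order $\binom{L}{k}\theta_N^{-k}z(1)^{L-k}N^{k-1}/(k-1)!$, so the ratio of the $K=k$ term to the $K=1$ term is $O((N/\theta_N)^{k-1})\to 0$ for every $k\geq 2$. This yields $Z_{L,N}\simeq L\,z(1)^{L-1}/\theta_N$ and, more importantly, forces the stationary mass onto configurations with exactly one cluster site. The joint law of the cluster location and the remaining $L-1$ occupation numbers factorises in the limit: the location becomes uniform on $\Lambda$ by the symmetry contained in $\binom{L}{1}=L$, and the remaining marginals converge to the iid bulk distribution of Theorem~\ref{thm1} by exactly the equivalence-of-ensembles argument used there. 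Since those bulk sites take values in $\{0,\ldots,A-1\}$, their total mass is bounded by $(L-1)(A-1)=O(1)$, which gives $M_N/N\to 1$.

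For $\gamma<1$ the opposite regime applies: the ratio of the $K=L-j$ contribution to the $K=L$ contribution is $O((\theta_N/N)^{j})\to 0$ for each $j\geq 1$, so asymptotically every site carries occupation $\geq A$. Conditional on this event every $\eta_x$ contributes the same weight $\theta_N^{-1}$, so the canonical law becomes uniform on $\{\eta\in\N_0^L:\eta_x\geq A,\ \sum_x\eta_x=N\}$. Substituting $\eta_x=A+\xi_x$ turns this into the uniform law on compositions of $N-AL$ into $L$ non-negative parts. The classical identification of the rescaled uniform composition with a Dirichlet$(1,\ldots,1)$ vector — equivalently, with the spacings of $L-1$ iid uniform points on $[0,1]$ — yields joint convergence of $(\eta_x/N)_{x\in\Lambda}$ to the uniform distribution on the simplex, whose one-dimensional marginals have CDF $1-(1-u)^{L-1}$ and satisfy $\sum_x Z_x =1$.

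The main obstacle is the book-keeping in the two ratio estimates and in the approximation of $Z^c_{K,M}$: one must verify that the sum over $M$ is indeed dominated by $M$ with $N-M=O(1)$ (Part 1) or $M=N-O(1)$ (Part 2), so that the binomial $\binom{M-AK+K-1}{K-1}$ may be replaced by $N^{K-1}/(K-1)!$. Both are elementary consequences of the explicit formula for $Z^c_{K,M}$ and estimates analogous to \eqref{remainder}. Once the dominant $K$-term is isolated, Part 1 reduces to Theorem~\ref{thm1} applied on the $L-1$ non-cluster sites, and Part 2 reduces to a standard Dirichlet-limit computation; no new probabilistic input is needed beyond the techniques developed in Section~\ref{sec:results}.
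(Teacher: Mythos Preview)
Your proof is essentially the paper's: both isolate the dominant $K$-term in the phase-separated expansion of $Z_{L,N}$ ($K=1$ for $\gamma>1$, $K=L$ for $\gamma<1$), which is exactly the content of the paper's Lemma~\ref{lemma8}. Two small remarks: your appeal to ``the equivalence-of-ensembles argument of Theorem~\ref{thm1}'' for the bulk law when $\gamma>1$ is misplaced, since that result is about $L\to\infty$ while here $L$ is fixed --- but the conclusion follows in one line from the asymptotic $Z_{L,N}\simeq Lz(1)^{L-1}/\theta_N$ you have already established, which is precisely what the paper does; and for $\gamma<1$ you finish via the Dirichlet$(1,\ldots,1)$ identification of the uniform composition, whereas the paper computes the marginal CDF $\pi_{L,N}[\eta_x\leq uN]$ directly from the ratio $Z_{L-1,N-n}/Z_{L,N}$ using Lemma~\ref{lemma8} --- the two routes are equivalent, and yours has the bonus of delivering the full joint limit on the simplex rather than only the one-dimensional marginals.
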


\noindent \blue{Note that for $\gamma =1$ we cannot obtain a rigorous result and this case is discussed below in the penultimate paragraph of this subsection.} To prove this result, we first find the asymptotic behaviour of $Z_{L,N}$ \eqref{eq:ZLN} by direct computation and explicit estimates.\\

\begin{lemma}\label{lemma8}
    In the scaling limit \eqref{lscaling} for any fixed $L\geq 2$ the partition function \eqref{eq:ZLN} is asymptotically given by
    \[
    Z_{L,N} \simeq \left\{\begin{array}{cl}
       \frac{1}{\theta_N} L\, z(1)^{L-1}  &,\ \theta_N \gg N \ (\text{i.e. }\gamma >1) \\
       \frac{N^{L-1}}{\theta_N^L} \frac{1}{(L-1)!} &,\ \theta_N \ll N \ (\text{i.e. }\gamma <1)
    \end{array}\right.\quad\mbox{as }N\to\infty\ .
    \]
\end{lemma}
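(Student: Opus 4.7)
My plan is to follow the phase-separation decomposition used in the proof of Theorem~\ref{thm3}, grouping configurations by the number $K$ of \emph{condensed} sites ($\eta_x \geq A$) and the total condensed mass $M$. Since the weights $w_L$ \eqref{eq_weight} factorize, this gives
\[
Z_{L,N} = \sum_{K=0}^L \binom{L}{K} \sum_{M} Z^b_{L-K, N-M}\, Z^c_{K,M},
\]
with $Z^c_{K,M} = \theta_N^{-K} \binom{M-AK+K-1}{K-1}$ (as already computed in the proof of Theorem~\ref{thm3}) and $Z^b_{L-K,n}$ the weighted count of configurations on $L-K$ sites with occupations in $\{0,\ldots,A-1\}$ summing to $n$. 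The key structural fact is that $Z^b_{L-K,n}$ vanishes for $n > (A-1)(L-K)$, which for fixed $L$ and large $N$ forces $K \geq 1$ and restricts the $M$-sum to an interval of length $O(1)$ just below $N$; on this window the binomial $\binom{M-AK+K-1}{K-1}$ is uniformly $\sim N^{K-1}/(K-1)!$.

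For $\gamma > 1$ I expect the $K=1$ term to dominate, since $Z^c_{1,M}=1/\theta_N$ is independent of $M$ and the bulk sum is then unconstrained:
\[
\binom{L}{1} \frac{1}{\theta_N} \sum_{n=0}^{(A-1)(L-1)} Z^b_{L-1,n} = \frac{L\, z(1)^{L-1}}{\theta_N},
\]
using $\sum_{n\geq 0} Z^b_{L-1,n} = z(1)^{L-1}$ as in \eqref{extend}. For each $K\geq 2$ the binomial asymptotic together with $Z^b_{L-K,N-M}\leq z(1)^{L-K}$ (summed over the $O(1)$ admissible $M$) gives a contribution of order $N^{K-1}/\theta_N^K$, which is smaller than the $K=1$ term by a factor $(N/\theta_N)^{K-1}\to 0$ under $\theta_N \gg N$. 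Summing the finitely many $K\in\{2,\ldots,L\}$ then yields the first asymptotic.

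For $\gamma < 1$ the roles reverse and $K=L$ dominates, since $Z^c_{L,N}$ absorbs the full combinatorial factor $\binom{N-AL+L-1}{L-1}\simeq N^{L-1}/(L-1)!$. With $K=L$ there is no bulk ($Z^b_{0,0}=1$) and $M=N$ is forced, so the contribution is exactly of the claimed form. For $1\leq K<L$ the same bound yields $O(N^{K-1}/\theta_N^K)$, which compared with the $K=L$ value is smaller by a factor $(\theta_N/N)^{L-K}\to 0$.

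The computation is essentially elementary, so the main (modest) obstacle is bookkeeping: ensuring that summing $Z^b_{L-K,n}$ over admissible $n$ recovers $z(1)^{L-K}$ without error, and that the constraint $M\geq AK$ in $Z^c_{K,M}$ is vacuous on the relevant $M$-window for large $N$. Both follow because $L$ (and hence $K$) is held fixed while $N\to\infty$, which also keeps the number of subleading terms finite so that they can be bounded term by term.
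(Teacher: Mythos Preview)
Your proposal is correct and follows essentially the same approach as the paper: both decompose $Z_{L,N}$ by the number $K$ of cluster sites, reduce each $K$-contribution to $\binom{L}{K} z(1)^{L-K} \theta_N^{-K} N^{K-1}/(K-1)!$ using the constant weights for $\eta_x\geq A$, and then observe that the dominant term is $K=1$ when $\theta_N\gg N$ and $K=L$ when $\theta_N\ll N$. Your write-up is slightly more explicit than the paper's about why the $M$-sum collapses to an $O(1)$ window on which $\sum_n Z^b_{L-K,n}=z(1)^{L-K}$ exactly and the binomial is uniformly $\sim N^{K-1}/(K-1)!$, but the argument is the same.
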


\begin{proof}
    For $N$ large enough, there has to be at least one cluster site with $\eta_x \geq A$ and we have in analogy to \eqref{partk} and \eqref{remainder}
    \begin{equation}\label{eq:pfsum}
Z_{L,N} =z(1)^L \sum_{K=1}^L \frac{1}{(z(1)\theta_N )^K} {L\choose K} \frac{N^{K-1}}{(K-1)!}
    \end{equation}
    where $L$ is now fixed. It is clear that for $\theta_N \gg N$ the first term dominates the sum since
    \[
    Z_{L,N} \simeq\frac{z(1)^{L-1}}{\theta_N} \bigg( L+\sum_{K=1}^{L-1} \Big(\frac{N}{\theta_N}\Big)^K {L\choose K+1} \frac{1}{z(1)^K K!}\bigg)\ .
    \]
    For $\theta_N \ll N$ the last term dominates with
    \[
    Z_{L,N} \simeq\frac{N^{L-1}}{\theta_N^L} \bigg( \frac{1}{(L-1)!}+\sum_{K=1}^{L-1} \Big(\frac{\theta_N}{N}\Big)^K {L\choose L-K} \frac{1}{z(1)^K (L-K-1)!}\bigg)\ ,
    \]
    and this implies the statement.
\end{proof}

\textbf{$\boldsymbol{\gamma >1}$.} Since the partition function is dominated by a single cluster site we have
\begin{align*}
\pi_{L,N} \big[ |X_N|\geq 2\big] &=\sum_{n=2A}^N \frac{1}{\theta_N^2} \frac{Z_{L-2,N-n}}{Z_{L,N}} =\frac{1}{\theta_N^2} \bigg(\sum_{n=2A}^{N-(L-2)A} +\sum_{n=N-(L-2)A+1}^N \bigg)\frac{Z_{L-2,N-n}}{Z_{L,N}}\\
&\leq \frac{1}{\theta_N^2}\frac{L-2}{Lz(1)^2} N +\frac{1}{\theta_N^2}(L-2)A C_L \theta_N \leq \frac{C_L}{\theta_N} \to 0\mbox{ as }N\to\infty\ .
\end{align*}
Here we split the sum such that in the first part the ratio of partition functions is of order $1$ since both contain at least one cluster site, and in the second part the ratio is bounded by a multiple of $(1/\theta_N )^{-1}$ with the above lemma. 
By symmetry we therefore have $\pi_{L,N} [\eta_y =M_N ]=\frac1L (1+o(1))$ for any $y\in \Lambda$, and with Lemma \ref{lemma8} we get for all $n_x \in\{ 0,\ldots ,A-1\}$
\begin{align*}
&\pi_{L,N} \Big[\eta_x =n_x :x\in\Lambda\setminus\{ y\}\Big|\eta_y =M_N \Big] =\frac{1}{\theta_N}\frac{1}{Z_{L,N}}\Big(\prod_{x\neq y} w(n_x )\Big) \frac{1}{\pi_{L,N} [\eta_y =M_N ]}\\
&\qquad \simeq \frac{1}{L\pi_{L,N} [\eta_y =M_N ]}\frac{1}{z(1)^{L-1}} \prod_{x\neq y} w(n_x )\simeq \frac{1}{z(1)^{L-1}} \prod_{x\neq y} w(n_x )
\end{align*}
which implies the result.\\

\textbf{$\boldsymbol{\gamma <1}$.} For any $u<1$ and $n\leq uN$ we have $\frac{Z_{L-1,N-n}}{Z_{L,N}}\simeq \theta_N \frac{(N-n)^{L-2} (L-1)!}{(L-2)!N^{L-1}}$ with Lemma \ref{lemma8}, and therefore
\begin{align*}
    \pi_{L,N} \big[\eta_x \leq uN \big] &\simeq\sum_{n=0}^{A-1} w(n)\frac{Z_{L-1,N-n}}{Z_{L,N}} +\sum_{n=A}^{\lfloor uN\rfloor} \frac{Z_{L-1,N-n}}{\theta_N Z_{L,N}} \\
    &\simeq z(1)(L-1)\frac{\theta_N}{N} +(L-1)\int_0^u (1-s)^{L-2} ds \to 1-(1-u)^{L-1}
\end{align*}
as $N\to\infty$, which completes the proof of Theorem \ref{thm4}.\\

\begin{figure}
    \begin{center}
\mbox{\includegraphics[width=0.55\textwidth]{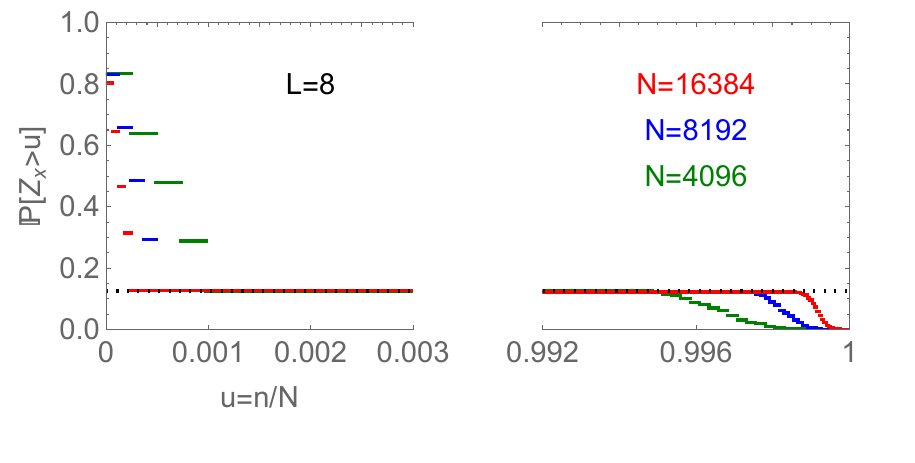}
\includegraphics[width=0.45\textwidth]{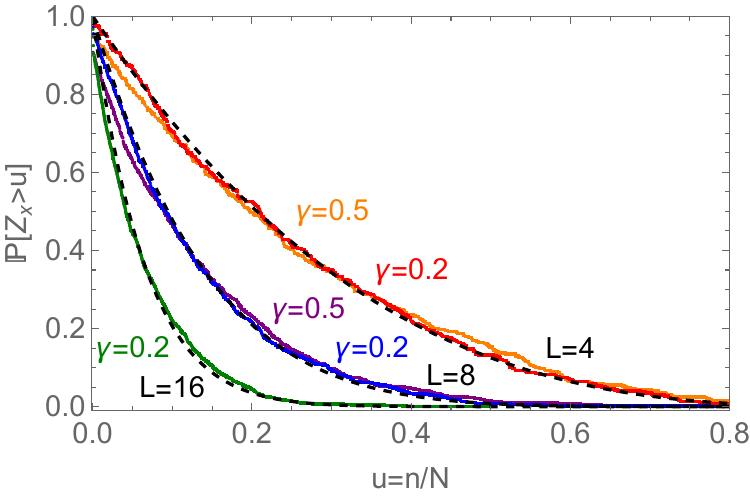}
    }
    \end{center}
    \caption{\label{fig:fixedl}
    For a ZRP with rates \eqref{eq_rates}, $A=2$ and $\rho_c =1/2$ in the scaling regime \eqref{lscaling} with $\theta_N =N^\gamma$ we show the marginal tail distribution of a re-scaled occupation number $Z_x =\eta_x /N$. For $\gamma =2 >1$ (left) the mass condenses on a single lattice site and the distribution shows a plateau at level $1/L$ and a uniform distribution for occupation numbers up to $A$. For $\gamma <1$ (right) the empirical tail distributions of $Z_x$ (coloured lines) fit well with the limiting distributions \eqref{zxlim} (dashed black lines) for various values of $L$ and $\gamma$, where $N=8192$. \blue{In each case $100$ realizations were used.}
    }
\end{figure}

The limiting distribution of $Z_x :=\lim_{N\to\infty} \eta_x /N$ corresponds essentially to a marginal of the uniform distribution of the total mass on the simplex with $Z_1 +\ldots +Z_L =1$, with a leading order correction $z(1)(L-1)\frac{\theta_N}{N}$ corresponding to mass on bulk sites with occupation numbers smaller than $A$. As is shown in Figure \ref{fig:fixedl} (right), for moderate size $N$ this is well visible in simulation results. For the condensing case $\gamma >1$ the empirical distribution of the same variables shows a plateau at level $1/L$, corresponding to the volume fraction of the condensate (see Figure \ref{fig:fixedl} (left)).

Simulation results for the intermediate case $\gamma =1$ show a combination of both cases, with \blue{a} non-zero probability of observing several cluster sites in the condensed phase. Since all terms in \eqref{eq:pfsum} contribute to the partition function \blue{to leading order,} a conclusive analysis is difficult and a simple asymptotic expression cannot be obtained. \blue{The condensed mass is shared uniformly among the cluster sites, and the number of sites is random and changes in time and between realizations. Therefore, stationary empirical tails, as shown in Figure \ref{fig:fixedl} for $\gamma\neq 1$, show a non-specific generic decay in this case, which we chose not to include. 
Since it does not lead to any generic behaviour, we do not discuss the case $\gamma =1$ any further and rather consider the dependence of the condensate statistics on modifications of the jump rates in the Discussion below.}

\blue{For $\gamma >1$, the unique condensate location will exhibit a metastable motion on the lattice $\Lambda$ in analogy to previous results \cite{beltran2012metastability,seo2019condensation} for condensing ZRPs. The time scale for this motion will be dominated by the time it takes to create a second condensate site from bulk sites, which happens when a site with $\eta_x =A$ gains another particle and reaches the stable occupation number $A+1$. A simple argument shows that to leading order this happens after a time of order $\theta_N$. Then both condensate sites compete for particles in analogy to a symmetric random walk with absorbing boundary conditions, and only with probability $1/N$ will the new condensate site win, resulting in an effective motion. Therefore, we expect the metastable motion to occur on the scale $N\theta_N \gg N^2$. The precise motion will depend on the geometry of the lattice, and it would be an interesting question to establish this rigorously applying results in \cite{beltran2015martingale,landim2019metastable} or a recent approach on the $\Gamma$-expansion of large deviation rate functions in \cite{choi2024}.}\\


\section{Discussion\label{sec:conclusion}}

We have shown that the condensed phase in zero-range processes of the form \eqref{eq_rates} with a fast rate $\theta_L \to\infty$ exhibit a condensation transition (Theorem \ref{thm1}). As long as the fast rate scales like $L^\gamma$ with $\gamma\in (0,2)$, the condensed phase consists of a diverging number of independent, exponentially distributed clusters on the scale $C_L \propto \sqrt{\theta_L}$ \eqref{scale} as shown in Theorem \ref{thm2}. For $\gamma >2$ this scale $C_L \gg L$ would exceed the scale of the total mass in the system $N=\rho L$, and the condensate consists only of a single cluster site (Theorem \ref{thm3}). 

Our main result Theorem \ref{thm2} relies on the representation \eqref{llt} of the canonical partition function, where $\nu_{\Phi_L (\rho )}^L \Big[\sum_{x\in\Lambda} \eta_x =N\Big]$ is the probability of a typical event. For $\gamma <2$ each term in the sum $\sum_{x\in\Lambda} \eta_x$ is asymptotically negligible so that the Lindeberg condition is fulfilled and the local limit theorem implies that the above probability is proportional to the inverse standard deviation of the sum. For $\gamma\geq 2$ cluster sites are of size $O(N)$ and make a non-vanishing contribution to the sum, so this argument no longer applies. For $\gamma >2$ we can use elementary estimates to see that asymptotically there is only a single cluster site (Theorem \ref{thm3}). For the boundary case $\gamma =2$ the condensed phase consists of several clusters of macroscopic size $O(N)$ which can be confirmed by simulations, but a rigorous analysis is difficult since neither of the above methods applies.

\begin{figure}
    \begin{center}
\mbox{\includegraphics[width=0.5\textwidth]{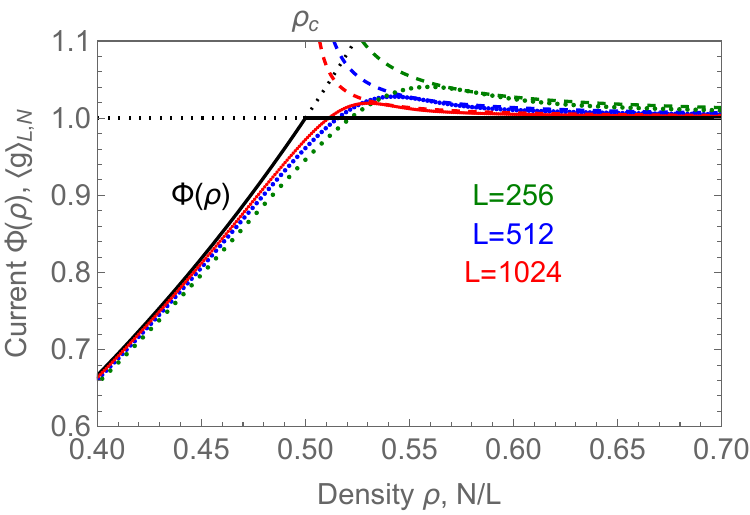}
\ \includegraphics[width=0.5\textwidth]{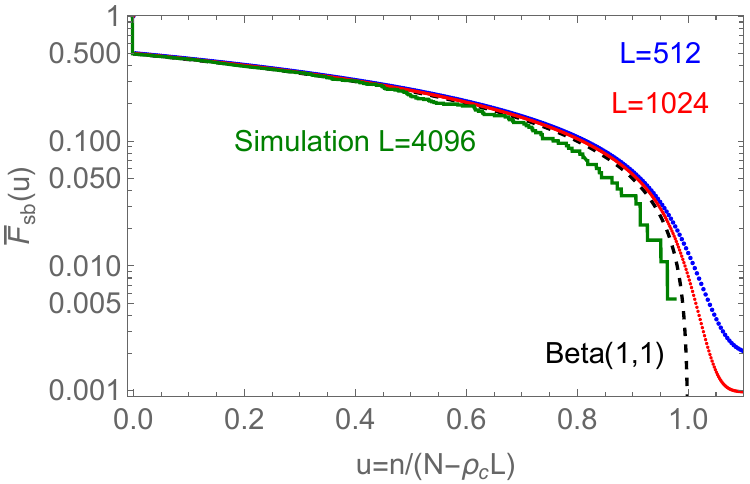}
    }
    \end{center}
    \caption{\label{fig:tailpd}
    For a ZRP with rates \eqref{pdzrp}, $A=2$ and $\rho_c =1/2$, the recursively computed canonical current \eqref{cancurr} (coloured dotted lines) and the grand-canonical current \eqref{gcancurr} (coloured dashed lines) exhibit an overshoot compared to the asymptotic current (full black line) as seen on the left. On the right for $\rho=1$ the size-biased tail distributions $\overline{F}_{sb} (u)$ from recursively computed tails (dotted blue and red lines, analogous to \eqref{cancurr}) and empirical tails from simulations (green line) compare well with an asymptotic Beta$(1,1)$ distribution (black dashed line), as is expected for Poisson-Dirichlet statistics with parameter $1$ (see \cite{chleboun2022poisson} and \cite{feng2010poisson} for details).
    }
\end{figure}

Our results do not depend on the bulk dynamics of the system and hold for general stationary weights $w(n)$ \blue{for} $n=0,\ldots ,A-1$. \blue{The fast rate creates an effective (soft) exclusion interaction for the unstable cluster size $A$, and it takes on the order of $\theta_L$ time for a bulk site to turn into a condensed site with $\eta_x \geq A$. Condensed sites are then free to grow arbitrarily large since they lose particles only at rate $1$, and at stationarity a balance of currents between bulk and cluster sites is established. The number of condensed clusters (and thus their stationary size) clearly depends on $\theta_L$, the slower $\theta_L$ grows with $L$ the easier it is to create a new condensed site. The balance of currents depends on the density $\rho$, as is discussed in detail in the heuristic Section \ref{sec:heuristics}. Note that condensation in this model is driven by particles being expelled from the bulk filled beyond its capacity $\rho_c$, rather than particles attracting each other as previously studied in ZRPs with decreasing jump rates \cite{o1998jamming,evans2000phase,jeon2000size,grosskinsky2003condensation,godreche2003dynamics}.}

It is also not crucial that fast rates occur only for a single occupation number, and our analysis generalizes directly to more occupation numbers with a resulting effective scale given by the inverse product of those rates in analogy to \eqref{eq_weight}. \blue{More precisely, if we consider diverging rates $\theta_L^{(1)} ,\ldots ,\theta_L^{(k)}$ for occupation numbers $A_1 <\ldots <A_k$ then the stationary weights $w_L (n)$ of condensed clusters with $n\geq A_k$ scale like $\big(\prod_{i=1}^k \theta_L^{(i)} )^{-1}$. The same heuristic as above for condensation applies and our rigorous results can be generalized directly, albeit with significantly more complicated notation. In a well-established mapping \cite{kafri2002criterion}, where occupation numbers in the ZRP describe distances between particles in an exclusion model, the diverging rates can be interpreted as a finite-range intervention mechanism: If the distance between particles in the exclusion model gets large ($\geq A_1$) the distance decreases again with a high rate. If in spite of intervention by rare fluctuation the distance gets too large (${>}A_k$) then the mechanism now longer applies ("the next particle is out of sight"). Condensation in the ZRP then corresponds to large separations between particles in the exclusion model.}

On the other hand, the structure of the condensed phase depends rather strongly on \blue{the jump rates for occupation numbers above $A$ (or $A_k$), and here a systematic understanding is still lacking and a very interesting open question resulting from this work.} For example, in \cite{chleboun2022poisson} Section 5, a zero-range process with rates of the form
\begin{equation}\label{pdzrp}
g_L(n)  =
\begin{cases}
1 &\quad\text{ if } n=1,\ldots, A-1\, ,\\
\theta_L &\quad \text{ if } n=A\, ,\\
n/(n-1) &\quad \text{ if }n>A\, ,
\end{cases}    
\end{equation}
has been considered, which leads to stationary weights with $w_L (n)=\frac{1}{\theta_L n}$ for $n\geq A$ such that their size-biased version $nw_L (n)$ is uniform. This system is then in a general class of models that exhibits macroscopic clusters with Poisson-Dirichlet statistics in the scaling $\theta_L /L\to\Theta\geq 0$ (see \cite{chleboun2022poisson} for details), whereas in the present model we would see iid clusters on the scale $O(\sqrt{L})$. \blue{Thus,} the rather innocent change in the jump rates (we still have $g_L (n)\to 1$ for $n\to\infty$) leads to a strong change in the statistics of the condensed phase. In terms of the birth-death heuristics for cluster sites (cf. Section \ref{sec:heuristics}) this is even counter-intuitive, since in a ZRP with rates \eqref{pdzrp} cluster sites lose particles with rates $g_L (n)=\frac{n}{n-1}>1$ but still grow to a larger macroscopic scale. This is the result of an overshoot effect in the stationary current of the system for supercritical densities as is illustrated in Figure \ref{fig:tailpd}, so that the rate of incoming particles into cluster sites is also increased. This overshoot is rather common and has been observed in many other condensing zero-range models \cite{evans2006canonical,chleboun2010finite,rafferty2018monotonicity}. In the above system, this leads indeed to Poisson-Dirichlet statistics for the cluster sites which is consistent with an asymptotic Beta distribution for the empirical measure of cluster sites (see \cite{feng2010poisson} for details).

In general, relations \eqref{pf} and \eqref{pf2} hold for any interacting particle system with stationary product measures. They constitute a microscopic version of a \blue{stick-breaking} process for residual allocation models (see e.g.\ \cite{feng2021note} and references therein), which have so far mostly been used in statistics and population genetics \cite{feng2010poisson}. Together with the general representation \eqref{llt}, this is a promising starting point to establish a systematic understanding of the structure of the condensed phase in condensing particle systems with stationary product measures.\\

\section*{Acknowledgement}

The authors are grateful for useful discussions with Simon Gabriel and Paul Chleboun, and for preliminary work on simulations by Joshua Blank. This work (Grant No. RGNS 63-180) was supported by Office of the Permanent Secretary, Ministry of Higher Education, Science, Research and Innovation  (OPS MHESI), Thailand Science Research and Innovation (TSRI) and Mahidol University.\\

Data sets generated for simulations are available from the corresponding author on reasonable request.\\



\end{document}